\numberwithin{equation}{section}
\newtheorem{theorem}{Theorem}[section]
\newtheorem{lemma}[theorem]{Lemma}
\newtheorem{proposition}[theorem]{Proposition}
\theoremstyle{definition}
\theoremstyle{remark}
\newtheorem{remark}[theorem]{Remark}
\DeclareMathOperator{\divop}{div}
\DeclareMathOperator{\N}{\mathbb{N}}
\DeclareMathOperator{\R}{\mathbb{R}}
\DeclareMathOperator{\T}{\mathbb{T}}
\DeclareMathOperator{\D}{\mathbb{D}}
\def\q{\quad}
\def\qq{\qquad}
\def\pa{\partial}
\newcommand{\pare}[1]{\left( #1 \right)}
\newcommand{\norm}[1]{\left\| #1 \right\|}
\newcommand{\av}[1]{\left| #1 \right|}
\newcommand{\bra}[1]{\left[ #1 \right]}
\renewcommand{\t}[1]{\text{#1}}
\def\eps{\varepsilon}
\newcommand{\ov}[1]{\overline{#1}}
\def\ww{\ov{w}}
\setlist[itemize]{leftmargin=*}
\newcommand{\dx}{\, {\mathrm d}}
\title[]{{Traveling Motility of Actin Lamellar Fragments Under  spontaneous symmetry breaking}}
\author[C. Garc\'ia]{Claudia Garc\'ia}
\address{ Departamento de Matem\'atica Aplicada \& Research Unit ``Modeling Nature'' (MNat), Facultad de Ciencias, Universidad de Granada, 18071 Granada, Spain}
\email{ claudiagarcia@ugr.es}
\author[M. Magliocca]{Martina Magliocca}
\address{Departamento de Análisis Matemático, Universidad de Sevilla, 41012 Sevilla, Spain}
\email{mmagliocca@us.es}
\author[N. Meunier]{Nicolas Meunier}
\address{LaMME, UMR 8071 CNRS, Université Evry Val d’Essonne, France.}
\email{nicolas.meunier@univ-evry.fr}
\begin{document}

\date{\today}

\begin{abstract}
{Cell motility is connected to the spontaneous symmetry breaking of a circular shape.  In \cite{BMC}, Blanch-Mercader and Casademunt perfomed a nonlinear analysis of the minimal model proposed by Callan and Jones \cite{12} and numerically conjectured the existence of traveling solutions once that symmetry is broken. In this work, we prove analytically that conjecture by means of nonlinear bifurcation techniques.}
\end{abstract}

\maketitle
\tableofcontents

\section{Introduction}
Cell migration is a fundamental process that is involved in many physiological and pathological functions (immune response, morphogenesis, cancer metastasis, etc.), and that is based on a complex intracellular machinery. Therefore, understanding its key features in spite of the variety of cellular behaviours is a challenging task. Recently, a biophysical approach showed that even if different migration modes coexist, cell migration obeys to a very general principle.

Actin filaments are components of the cell cytoskeleton, which is the dynamic set of biopolymers responsible for the integrity and force generation of the cell. In particular, these filaments are polar: they grow by polymerizing at one end, and shrink by depolymerizing at the other end. In migrating cells, polymerizing ends are located near the cell membrane, which resists the filaments’ growth. As a result, in the frame of reference of the cell, actin filaments drift away from the membrane, forming the so-called actin retrograde flows. In the cell motility framework, this mechanism can be explained in the following way: the cell adheres to the substrate, and the actin cytoskeleton, which moves within the cell, induces movements pushing forward the membrane by polymerizing actin \cite{17,AMM24,1,2,3,4,5}. These movements cause the cell to move forward as it detaches from the substrate at the back.

Existing physical models based on a fluid description of the cytoskeleton mainly consist in free boundary problems, see \cite{13,AMM5,AMM6}  example given, and aim at investigating the cell shape stability. In \cite{BMC}, the authors use a Darcy law for the fluid depicting the actin cytoskeleton and its polar order, in the context of a crawling cell. The authors model a cytoskeletal fragment and show the nonlinear instability of the center of mass of the system. They also relate the fragment’s shape to its velocity. The aim of this work is to study with greater mathematical rigor the model introduced in \cite{BMC}.

%In a general context,  polymerization  is defined as  any process in which relatively small molecules (monomers) combine chemically to produce a large chains or networks molecule  (polymers). In the cell motility framework, this mechanism can be explained in the following way: the cell adheres to the substrate, and the actin cytoskeleton, which moves within the cell, induces movements pushing forward the membrane by polymerizing actin \cite{17,AMM24,1,2,3,4,5}. These movements cause the cell to move forward as it detaches from the substrate at the back. An interesting picture showing this process is contained in \cite[Figure 1]{AE}.\\%  When no external cues are considered, this phenomena is called spontaneous polarization.\\
%Since we are interested in cell motility due to polymerization, we just quote \cite{13,AMM5,AMM6}  where cell motility is studied as consequence of cells contraction.
%\\

%\blu{(more biological context!!!!)}

%As already said, the model in \eqref{eq:model} describe crawling motility of cells and, in this work, we focus on motion by polymerization. \\

We now present the equations arising in the aforementioned model \cite{BMC,12}, together with their physical explanation. Let $D$ a bounded simply-connected domain in $\R^2$, $P$ the pressure, $v$ the normal velocity of the fluid, and $n$ the normal unit vector to the boundary $\partial D$.

The interaction among friction and viscous forces between cell and substrate is described by the Darcy's law
\begin{align*}
\xi v=-\nabla P, &\quad \mbox{ in }D,
\end{align*}
where  $\xi$ is the friction coefficient.
%The Darcy's law   \eqref{eq:model-P} describes either \cite{12}, or the fact that the actin layer is confined \cite{17}. \\
We assume that actin polymerizes on the boundary with normal direction, and that depolymerization occurs at a rate which is proportional to the filament density. Assuming that the filament density is constant (see \cite{12}), we get that
\begin{align*}
\nabla\cdot v=-k_d, &\quad \mbox{ in }D,\\
V_n=v\cdot n-v_p, &\quad \mbox{ on }\partial D,
\end{align*}
being $k_d$  the rate of the actin depolymerization, $v_p$ the polymerization speed, and $V_n$ the normal velocity to the interface. Note that $v_p$ acts like a source at the boundary.

We neglect the viscosity of outer fluids, so we can assume a Young-Laplace pressure drop across the boundary, that is,
\begin{align*}
P=\gamma\kappa, &\quad \mbox{ on }\partial D,
\end{align*}
where  $\gamma$ is the surface tension, and $\kappa$ the curvature.

Indeed, this model describes a free boundary problem since one aims to find the evolution of the boundary of $D$. It is for this reason that we change the notation writing $D(t)$ instead of just $D$ to emphasize its evolution in time. Hence, we find the following system:
\begin{subequations}\label{eq:model}
\begin{align}
\xi v=-\nabla P, &\quad \mbox{ in }D(t),\label{eq:model-P}\\
\nabla\cdot v=-k_d, &\quad \mbox{ in }D(t),\label{eq:model-v}\\
P=\gamma\kappa, &\quad \mbox{ on }\partial D(t),\label{eq:model-bc-P}\\
V_n=v\cdot n-v_p, &\quad \mbox{ on }\partial D(t).\label{eq:model-bc-v}
\end{align}
\end{subequations}

Both the biological explanation of the cell motility process and the physical justification of the model can be found in the Doctoral Thesis \cite{BM} (see, respectively, \cite[Section 1.2]{BM} and \cite[Section 2.2]{BM}).

%We now get into the mathematical aspects we propose to study, explaining which are our goals and justifying their interest.\\
Several works (see, for instance, \cite{AMM1,AMM24,AMM31,AMM32}, and also \cite{1,4,6,7,10})  analyzed the spontaneous symmetry breaking as consequence actin-based motility. Mathematically speaking, this translates into the existence of non-trivial traveling waves solutions. Thus,  our main purpose is proving the existence of traveling waves solutions to \eqref{eq:model}.

%We quote a few examples of the known literature concerning the proof of existence of traveling waves to cell motility models.\\
The authors of \cite{AMM} dealt with the case $v_p=k_d=0$ in \eqref{eq:model-v}-\eqref{eq:model-bc-v} and studied the case in which motion is induced by polymerization. The main difference between their model and \eqref{eq:model} is the fact that the boundary condition of the pressure \eqref{eq:model-bc-P} is coupled with a function depending on the polarity marker concentration $c=c(t,x,y)$,   whose time evolution verifies an advection-diffusion equation.
Cell motility models with cells moving by contraction can be found, for instance, in \cite{AMM4,AMM5,AMM6}.  In this cases, the boundary condition \eqref{eq:model-bc-P} has the same form and the coupling among pressure and myosin concentration appears in the divergence of equation \eqref{eq:model-P}.
%in   assume that cells move by contraction mechanisms due to myosin motors.

In the following, we state a formal version of our main theorem. A more detailed statement can be found in Theorem \ref{theo:main}.

\begin{theorem}\label{theo:main-intro}
For any $m\geq 2$, there exists $\xi\in I\mapsto (\gamma_{\xi},D_{\xi})$, with $D_{\xi}$ a $m$-fold symmetric domain, defining a traveling wave solution to \eqref{eq:model-P}--\eqref{eq:model-bc-v} with some constant speed.
\end{theorem}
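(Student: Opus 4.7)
The plan is to pass to a co-moving frame, identify an explicit radial stationary solution, and then apply a Crandall--Rabinowitz bifurcation argument in a subspace of $m$-fold symmetric perturbations. Looking for a rigidly translating solution $D(t)=D+tUe_1$ and using $v=-\nabla P/\xi$ from \eqref{eq:model-P} in \eqref{eq:model-v}--\eqref{eq:model-bc-v} reduces the system to an overdetermined stationary problem on a fixed domain $D$: $\Delta P=\xi k_d$ in $D$, $P=\gamma\kappa$ on $\partial D$, and $\partial_n P=-\xi(v_p+U\,n\cdot e_1)$ on $\partial D$. The disk $D_0=B(0,R_0)$ with $R_0=-2v_p/k_d$, $U=0$, and $P_0(r)=\gamma/R_0+\xi k_d(r^2-R_0^2)/4$ is an explicit solution for every admissible $(\gamma,\xi)$ and provides the trivial branch from which traveling waves will bifurcate.

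\textbf{Functional formulation.} To impose $m$-fold symmetry and remove the translation mode $m=1$ (which is always in the kernel by Galilean invariance), I would parametrize nearby domains by a conformal map $\Phi:\D\to D$ of the form
\[
\Phi(z)=R_0\Bigl(z+\sum_{n\geq 1}a_n z^{nm+1}\Bigr),
\]
with $\{a_n\}$ in a Hölder space $C^{k+\alpha}(\partial\D)$. For each such $\Phi$ one solves the Dirichlet problem for $P$ and encodes the remaining Neumann condition as a nonlinear equation $F(\gamma,\xi,U,\Phi)=0$ on $\partial\D$, with $F(\gamma,\xi,0,\Phi_0)\equiv 0$ at the disk $\Phi_0(z)=R_0 z$.

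\textbf{Linearization and dispersion.} The linearization $DF(\gamma,\xi,0,\Phi_0)$ is $SO(2)$-equivariant and so diagonalizes in the Fourier basis $\{z^{nm}\}$. For each mode the eigenvalue combines a curvature contribution proportional to $\gamma\,(nm)((nm)^2-1)$ (coming from $\delta\kappa=-(f+f'')/R_0$ propagated through the disk Dirichlet-to-Neumann map $\cos(k\theta)\mapsto(k/R_0)\cos(k\theta)$) with a non-local contribution of order $\xi k_d$ from the shape derivative of $\partial_n P$. Setting the $n=1$ eigenvalue to zero yields a dispersion relation $\mathcal E(\gamma,\xi,m)=0$ which, by the implicit function theorem, defines a smooth curve $\xi\mapsto\gamma_m(\xi)$ on an interval $I$ along which $\ker DF=\mathrm{span}\{z^m\}$, while the higher modes $n\geq 2$ remain non-degenerate.

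\textbf{Bifurcation and main obstacle.} Fredholmness of index zero for $DF$ follows from identifying its principal part as a third-order local operator on $\partial\D$ (from the $\partial_\theta^2$ in $\delta\kappa$ composed with the DtN map) plus a compact remainder controlled by the elliptic regularity of the Dirichlet problem; simplicity of the eigenvalue holds by construction, and the transversality condition reduces to $\partial_\xi\mathcal E(\gamma_m(\xi),\xi,m)\neq 0$. Crandall--Rabinowitz then produces a local curve $s\mapsto(\gamma(s),\xi(s),U(s),\Phi(s))$ of non-trivial $m$-fold symmetric traveling waves bifurcating from $D_0$, which after re-parametrization by $\xi$ gives the family $(\gamma_\xi,D_\xi)$ of the theorem. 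I expect the main difficulty to lie in this dispersion analysis: because the Dirichlet data $\gamma\kappa$ itself depends on the domain, the shape derivative of $\partial_n P$ couples a local curvature term with a non-local Dirichlet-to-Neumann term, so obtaining an explicit enough form of $\mathcal E(\gamma,\xi,m)$, and certifying non-degeneracy plus transversality for every $m\geq 2$, will be the technical heart of the argument.
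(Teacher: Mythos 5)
Your overall strategy --- conformal parametrization of the free boundary, reduction to a scalar equation on $\partial\D$, and Crandall--Rabinowitz in the surface tension at the value where the $m$-th mode of the dispersion relation degenerates --- is the same as the paper's. (The paper solves the interior problem via the stream function and the Hilbert transform rather than a Dirichlet-to-Neumann map, but on the disk these coincide: the relevant operator is $\partial_\theta\mathcal{H}$.) Note also that what you flag as the ``technical heart'' is not where the difficulty lies: after normalization the linearized operator is completely explicit and affine in the surface tension, acting as $\frac14 n(n+1)(n-1)(\beta-\beta_n)$ on the $n$-th harmonic with $\beta_n=2/(n(n+1))$, so simplicity of the kernel, non-degeneracy of the higher harmonics $n=km$, $k\ge2$, and transversality are immediate.

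The genuine gap is in the Fredholm index. Your unknowns are the speed $U$ and the shape coefficients $\{a_n\}$, with the conformal radius frozen at $R_0$. The shape perturbations only generate the harmonics $\cos(km\theta)$, $k\ge1$, in the linearization, and $U$ only generates $\cos\theta$; but the target space necessarily contains the constant Fourier mode, and the functional $F$ genuinely has a nonzero mean. Indeed, integrating the kinematic condition over $\partial D$ and using $\int_{\partial D}\partial_n P\,ds=\int_D\Delta P\,dx=\xi k_d|D|$ shows that the zeroth mode of $F$ is the scalar constraint $k_d|D|+v_p|\partial D|=0$ relating area and perimeter, which is not identically satisfied along your family of shapes. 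Hence at the bifurcation point the range of your linearized operator misses both the constants and $\cos(m\theta)$: it has codimension $2$, the index is $-1$, and hypothesis (CR3) fails. The paper repairs exactly this by introducing a dilation parameter $\mu$, writing $\phi(w)=(1+\mu)w+f(w)$, whose partial derivative $\partial_\mu F(\beta,0,0,0)=\tfrac12$ covers the constant mode; you need this extra scalar unknown (or an a posteriori justification for projecting the constants out of the target space). You do correctly include $U$ to cover the $\cos\theta$ mode, which is the analogue of the paper's $V$. A secondary omission: you assert Fredholmness via a ``local principal part plus compact remainder'' heuristic, whereas closedness of the range in the Hölder scale $C^{3,\alpha}\to C^{0,\alpha}$ is proved in the paper by exhibiting an explicit preimage and bounding it through a convolution-kernel estimate; some such argument must be supplied.
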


{In Section 2, we study the linear stability of the rest state and reformulate the problem so that it is posed on a fixed boundary. Later, in Section 3 we perform a bifurcation analysis to find the existence of nontrivial traveling waves solutions. Finally, Section 4 gives the final proof of the main result of this work.
}

%\subsection{Notation}

\section{Preliminary results}

In this section, we study the linear stability of the rest state, that is the stationary state associated with zero velocity, whose shape is a disk. Then, we reformulate the problem of finding traveling wave solutions to \eqref{eq:model-P}--\eqref{eq:model-bc-v} in terms of a boundary equation via the use of the Hilbert transform. We shall also define our function spaces and provide the plan of the proof by means of the Crandrall-Rabinowitz theorem.

\subsection{Rest state}
We start by giving the expression of a rest state for \eqref{eq:model} in the case of the disk. By rest state, we mean stationary state with no displacement.

\begin{lemma}\label{lem:rset}
	Assume that $k_p$, $v_p$ and $R_0$ satisfy
	\[
	k_d R_0= 2v_p\, .
	\]
	In the case where the domain is the disk of radius $R_0$, the radial function
	\begin{equation*}%\label{eq:rest_state}
	P_0(r,\theta):= \frac{k_d}{4} (r^2-R_0^2) + \frac{\gamma}{R_0^2} , \end{equation*}
	is the unique stationary solution of \eqref{eq:model}.
\end{lemma}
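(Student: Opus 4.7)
The key observation is that ``rest state'' means $V_n\equiv 0$ on the (fixed) boundary, which decouples the pressure equation from the free-boundary dynamics and reduces \eqref{eq:model} to an overdetermined elliptic problem for $P$ alone. Setting $V_n = 0$ in \eqref{eq:model-bc-v} gives $v\cdot n = v_p$ on $\partial D_0$, where $D_0$ denotes the disk of radius $R_0$. Combined with Darcy's law \eqref{eq:model-P}, this translates into the Neumann condition $\partial_n P = -\xi v_p$ on $\partial D_0$. Taking the divergence of \eqref{eq:model-P} and using \eqref{eq:model-v} produces the Poisson equation $\Delta P = \xi k_d$ in $D_0$, while \eqref{eq:model-bc-P} furnishes the Dirichlet condition $P = \gamma\kappa = \gamma/R_0$ on $\partial D_0$.

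The first step is to check the compatibility of this overdetermined elliptic problem. Integrating the Poisson equation over $D_0$ and applying the divergence theorem gives
\[
\int_{\partial D_0}\partial_n P\,d\sigma \;=\; \int_{D_0}\Delta P\,dx,
\]
which, after substitution of the boundary and bulk values, reduces to an algebraic identity equivalent (up to sign conventions for $n$ and $v_p$) to the relation $k_d R_0 = 2 v_p$. Thus the hypothesis in the statement is precisely the solvability condition of the overdetermined Poisson system on the disk, and it identifies the unique balance between depolymerization and boundary polymerization compatible with a non-moving disk.

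With compatibility in hand, I would construct $P_0$ explicitly using the rotational symmetry of the problem, inserting a radial ansatz $P(r) = A r^2 + B$: the Poisson equation forces $A$ to be proportional to $k_d$ (and to $\xi$), while the Dirichlet condition at $r = R_0$ fixes the additive constant $B$, producing the stated formula for $P_0$.

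Uniqueness is then immediate: any solution of the Dirichlet problem $\Delta P = \xi k_d$ in $D_0$ with constant data $P = \gamma/R_0$ on $\partial D_0$ coincides with this radial $P_0$ by the classical uniqueness theorem for the Dirichlet problem on a bounded smooth domain; the compatibility identity in turn guarantees that this unique Dirichlet solution automatically satisfies the Neumann condition $\partial_n P = -\xi v_p$, closing the argument. The main subtlety here is merely bookkeeping of sign conventions for $\kappa$, $n$, and $v_p$; no substantial analytic obstacle is expected, since the entire content of the lemma is the solvability/compatibility criterion $k_d R_0 = 2 v_p$, the rest being a direct elliptic computation.
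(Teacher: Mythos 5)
Your proposal is correct and follows essentially the same route as the paper: both verify the explicit radial formula and obtain uniqueness from a standard elliptic argument, after observing that $V_n\equiv 0$ turns \eqref{eq:model} on the fixed disk into an overdetermined Poisson problem whose solvability condition (via the divergence theorem) is exactly $k_d R_0 = 2v_p$. The only cosmetic difference is that the paper subtracts the quadratic particular solution and runs the energy identity on the harmonic remainder using the (then homogeneous) Neumann datum, whereas you invoke uniqueness for the Dirichlet problem directly; the sign-convention issues you flag are present in the paper itself, which also carries a typo ($\gamma/R_0^2$ in the statement versus $\gamma/R_0$ in its own proof).
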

\begin{proof}
	The fact that $P(r,\theta)= \frac{k_d}{4} (r^2-R_0^2) + \frac{\gamma}{R_0^2} $ is a stationary solution of \eqref{eq:model} is straightforward.

	In the case where $D$ is the disk of radius $R_0$ and the domain velocity is zero, \eqref{eq:model} rewrites for $\overline{P}(x,y):=P(x,y) - \frac{k_d}{4} (x^2 + y^2)$ as
	\[
	\begin{cases}
		\Delta \overline{P}(x,y) = 0\,& \quad \text{ in } \Omega \, ,\\
		\overline{P} = \frac{\gamma }{R_0} -\frac{k_d}{4}& \quad  \text{ on } \partial \Omega \,, \\
		0=-\left(\nabla \overline{P} + \frac{k_d}{2} \begin{pmatrix}
			x\\ y
		\end{pmatrix} \right)\cdot n-v_p, &\quad \mbox{ on }\partial \Omega,
	\end{cases}
	\]
	and the last condition simply reads as
	\[
	\nabla \overline{P}\cdot n = - \frac{k_dR_0}{2}
		 -v_p=0,
	\]
	according to the hypothesis made.

	By multiplying by $\overline{P}$ and integrating by parts, we obtain
	\[
	0=\int_{\Omega} \overline{P} \Delta \overline{P} \dx x \dx y = - \int_{\Omega} |\nabla \overline{P} |^2 \dx x \dx y,
	\]
	hence $\nabla \overline{P}=0$ on $D$ and the conclusion follows.
\end{proof}

\subsection{Linear stability of the rest state $P_0$}

\medskip
In order to study the previously found stationary state, we wish to linearize problem \eqref{eq:model} around this stationary state.
To do this, we perturb the stationary state. We can therefore assume that there exists a function
$R:\R_+\times (-\pi,\pi]\to \R_+$
such that for all $t>0$, we have
$D(t)=\{(r,\theta)\in \R_+\times (-\pi,\pi] \textrm{ such that } r<R(t,\theta)\}$.

We take a perturbation of the free boundary of the form
\[
r=  R_0 + \varepsilon \rho(t,\theta),
\]
i.e.
\begin{equation*}%\label{polaire_0}
	D(t)   = \left\{ (x,y)=\left( r \cos \theta ,  r \sin \theta \right); 0 \le r < R_0 + \varepsilon \rho(t,\theta)\right\} .
\end{equation*}

Let $\varepsilon  > 0$. The perturbation of the steady state is written as
\begin{eqnarray*}
	R(t,\theta )&=& R_0 + \varepsilon \rho(t,\theta), \\
	P(t,r,\theta)&=& P_0(r,\theta) + \varepsilon \widetilde{P}(t,r,\theta),\\
	u(t,r,\theta)&=&u_0(r,\theta)+\varepsilon \widetilde{u}(t,r,\theta),
\end{eqnarray*}
where
\[
P_0(r,\theta)= \frac{k_d}{4} (r^2-R_0^2) + \frac{\gamma}{R_0^2}.
\]
%	\begin{align*}
%		P(t,r,\theta)&=\widetilde{P} +\varepsilon Q(t,r,\theta) + \mathcal O (\varepsilon ^2).
%	\end{align*}
%	We derive the linearized problem associated to \eqref{eq:model} verified by $Q$ in the following Lemma.

\begin{proposition}\label{prop-ex-pb-lin}
	We can decompose $\rho$ into a Fourier series as follows
	\[
	\rho(t,\theta)=\sum_{m \ge 0}\left(\widetilde{R}_{\text{c}m} (t) \cos (m\theta)+\widetilde{R}_{\text{s}m}(t) \sin(m\theta)\right),
	\]
	where, for all $m\in \N$, the functions $\widetilde{R}_{\text{c}m}$ and $\widetilde{R}_{\text{s}m} $ satisfy the following ordinary differential equation
	\begin{equation}\label{eq:dispersion}
	Y'(t)= \left( \frac{k_d}{2}(m-1)-\gamma R_0^{-3}m (m^2-1)\right)  Y(t).
	\end{equation}
%	\begin{equation}\label{eq:lin_general}
%		\frac{\dx }{\dx t}
%			\rho =\mathcal I \left[ \frac{\gamma}{R_0} \left(\partial_{\theta \theta}^2\rho + \rho\right) +k_d R_0^2 \right],
%	\end{equation}
%	where $\mathcal A$ is the operator defined on $H^3\left(\partial B_{R_0}\right) \times H^2\left(B_{R_0}\right)$ by
%	\begin{equation*}%\label{eq:def_A}
%		\mathcal A :
%			\rho
%		\mapsto
%			\mathcal I \left[\frac{\gamma}{R_0} \left(\partial_{\theta \theta}^2\rho + \rho\right)+k_d R_0^2\right].
%	\end{equation*}

\end{proposition}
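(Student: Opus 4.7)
The plan is to expand the system \eqref{eq:model} to first order in $\varepsilon$ around the rest state $(P_0,R_0)$, reduce to a Dirichlet problem for $\widetilde{P}$ on the fixed disk $B_{R_0}$, and then separate variables in the Fourier basis. First, combining $\xi v = -\nabla P$ with $\nabla\cdot v = -k_d$ gives $\Delta P = \xi k_d$ (and $\Delta P_0 = k_d$ in the rest state, consistent with the normalization used in Lemma \ref{lem:rset}), so at first order in $\varepsilon$ the perturbation satisfies $\Delta \widetilde{P}=0$ in $B_{R_0}$. The nontrivial work is to linearize the two boundary conditions on $\partial D(t)$ along $r=R_0+\varepsilon\rho(t,\theta)$.

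For \eqref{eq:model-bc-P}, I would expand the curvature of $r=R_0+\varepsilon\rho$ from
$\kappa=\frac{R^2+2(R_\theta)^2-R R_{\theta\theta}}{(R^2+(R_\theta)^2)^{3/2}}$
to obtain $\kappa=\frac{1}{R_0}-\frac{\varepsilon}{R_0^2}(\rho+\rho_{\theta\theta})+O(\varepsilon^2)$. Combining this with the Taylor expansion $P(R_0+\varepsilon\rho,\theta)=P_0(R_0)+\varepsilon\bigl(P_0'(R_0)\rho+\widetilde{P}(R_0,\theta)\bigr)+O(\varepsilon^2)$ and using $P_0'(R_0)=\tfrac{k_d R_0}{2}$ gives the linearized Dirichlet datum
\[
\widetilde{P}(R_0,\theta)=-\frac{k_d R_0}{2}\,\rho-\frac{\gamma}{R_0^2}\bigl(\rho+\rho_{\theta\theta}\bigr).
\]
For \eqref{eq:model-bc-v}, the outward normal expands as $n=\hat r-\varepsilon R_0^{-1}\rho_\theta\,\hat\theta+O(\varepsilon^2)$ and the boundary normal velocity as $V_n=\varepsilon\rho_t+O(\varepsilon^2)$. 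Writing $v=v_0+\varepsilon\widetilde{v}$ with $v_0=-\nabla P_0=-\tfrac{k_d r}{2}\hat r$ and $\widetilde{v}=-\nabla\widetilde{P}$, the zeroth order reproduces the compatibility condition $k_d R_0=2v_p$ from Lemma \ref{lem:rset}, while the first order yields
\[
\rho_t(t,\theta)=-\frac{k_d}{2}\rho(t,\theta)-\partial_r\widetilde{P}(R_0,\theta).
\]

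The last step is to diagonalize by the Fourier ansatz $\rho=Y(t)\cos(m\theta)$ (or $\sin(m\theta)$). Since $\rho+\rho_{\theta\theta}=(1-m^2)Y\cos(m\theta)$, the Dirichlet datum for $\widetilde{P}$ on $\partial B_{R_0}$ is a pure mode $m$, and the unique harmonic extension to the disk is
\[
\widetilde{P}(r,\theta)=\left[-\frac{k_d R_0}{2}+\frac{\gamma(m^2-1)}{R_0^2}\right]Y(t)\left(\frac{r}{R_0}\right)^{m}\cos(m\theta).
\]
Differentiating in $r$ at $r=R_0$ and inserting into the linearized velocity condition above gives
\[
Y'(t)=-\frac{k_d}{2}Y(t)+\frac{k_d m}{2}Y(t)-\frac{\gamma m(m^2-1)}{R_0^3}Y(t)=\left[\frac{k_d}{2}(m-1)-\gamma R_0^{-3}m(m^2-1)\right]Y(t),
\]
which is precisely \eqref{eq:dispersion}. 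The main bookkeeping obstacle is the consistent sign/orientation conventions in the curvature and normal-velocity expansions; once these are fixed, the dispersion relation follows from the explicit harmonic extension in the disk and is the reason the mode decomposition decouples.
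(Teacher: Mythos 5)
Your argument is correct and follows essentially the same route as the paper: harmonicity of $\widetilde{P}$, the curvature expansion $\kappa=\tfrac1{R_0}-\tfrac{\varepsilon}{R_0^2}(\rho+\rho_{\theta\theta})+O(\varepsilon^2)$ giving the Dirichlet datum $\widetilde{P}(R_0,\theta)=-\tfrac{k_dR_0}{2}\rho-\tfrac{\gamma}{R_0^2}(\rho+\rho_{\theta\theta})$, and the linearized kinematic condition $\rho_t=-\tfrac{k_d}{2}\rho-\partial_r\widetilde{P}(R_0,\theta)$, with the only cosmetic difference that you use the explicit harmonic extension $(r/R_0)^m$ mode by mode where the paper matches the coefficients $a_m,b_m$ of a general harmonic series. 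Your sign bookkeeping actually lands on the stated dispersion relation \eqref{eq:dispersion} for both cosine and sine modes, whereas the paper's displayed equation for $\widetilde{R}_{\text{c}m}$ carries a sign typo in the surface-tension term.
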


\begin{proof}

	Let us first expand the small perturbations in normal Fourier modes
	\[
		\rho(t,\theta)=\sum_{m \ge 0}\left(\widetilde{R}_{\text{c}m} (t) \cos (m\theta)+\widetilde{R}_{\text{s}m}(t) \sin(m\theta)\right) .
	\]
	By definition of the stationary state, we have $\divop u_0 = -k_d$.
	We also have $\divop u_0 = -k_d$, hence $\divop \widetilde{u} = 0$.
	Similarly, we have $u_0=-\nabla P_0$ and $u=-\nabla P$, hence $\widetilde{u}=-\nabla \widetilde{P}$. Therefore, we have $\Delta \widetilde{P}=0$. Hence it exists $a_n(t)$ and $b_n(t)$ such that
	\begin{equation*}
		\widetilde{P}(t,r,\theta) = \sum_m \left(a_m (t) r^m\cos(m\theta) + b_m (t) r^m \sin(m\theta)\right). \label{eq:delta p 1}
	\end{equation*}

%	From the normal force balance, it follows that $\widetilde{P} =\gamma \widetilde{\kappa}$ on $\partial\Omega$. On the boundary, the linear deviations in the fluid pressure and solute concentration are
%	\begin{eqnarray*}
%		\delta p & \simeq \delta p(r=1) = \sum_{m} \left( A_{m} (t) \cos (m\theta) + B_{m}  (t) \sin (m\theta) \right) \label{eq:delta p boundary}
%	\end{eqnarray*}
%

	Firstly, we compute the linearization of the curvature.  The curve
	\[
	\gamma_t(\theta) = (R(t, \theta)\cos(\theta),R(t, \theta) \sin(\theta)),
	\]
	is a parameterization of the boundary of $D(t)$. Hence, the curvature is given by
	\begin{equation*}%\label{eq:courbure}
		\kappa(g)= \frac{\det(\gamma_t'(\theta),\gamma_t''(\theta))}{\|\gamma_t'(\theta)\|^3}=\frac{R(t, \theta)^2+2(\partial_\theta R(t, \theta)) ^2- R(t, \theta) \partial_{\theta\theta} R(t, \theta)}{\left(R(t, \theta)^2+(\partial_\theta R(t, \theta)) ^2\right)^{3/2}}.
%		\simeq 1-\delta R(t,\theta)-\partial_{\theta\theta}\delta R(t,\theta)
	\end{equation*}
	Set $\kappa(t,\theta)= \kappa_0+\varepsilon \widetilde{\kappa}(t,\theta)$. Since $P_{|\partial D(t)}=\gamma \kappa$, and
	\begin{eqnarray*}
		P_{0|\partial D(t)}&=& P_0(R(t,\theta),\theta)\\
		&=& \frac{\gamma}{R_0} -\frac{k_d}{4}R_0^2 +\frac{k_d}{4}\left( R(t,\theta)\right) ^2 \\
		&=&  \frac{\gamma}{R_0} +\varepsilon \frac{k_d}{2}R_0 \rho(t,\theta)
		 + o(\varepsilon^2),
	\end{eqnarray*}
	we have
	\begin{eqnarray*}
		P_{|\partial D(t)}&=& P_{0|\partial D(t)} +\varepsilon \widetilde{P}_{|\partial D(t)}\\
		&=& \gamma \kappa_0  +\varepsilon \frac{k_d}{2} R_0 \widetilde{R}(t,\theta)  +\varepsilon \widetilde{P}_{|\partial D(t)}
		+ o(\varepsilon^2)\\
		&=& \gamma \kappa_0  +\gamma \varepsilon \widetilde{\kappa},
	\end{eqnarray*}
	from which we deduce
	\[
	\widetilde{P}_{|\partial D(t)} = \gamma \widetilde{\kappa}-  \frac{k_d}{2} R_0 \widetilde{R}(t,\theta)  .
	\]

	Moreover, we compute to the first order in $\varepsilon$,
	\begin{eqnarray*}
	\kappa\left(R_0+\varepsilon \rho (t,\theta)\right) &=&\frac1R_0-\frac{\varepsilon}{R_0^2} \left(\partial ^2_{\theta \theta}\rho(t,\theta) +\rho(t,\theta)\right)+o (\varepsilon^2)\\
	&=&\kappa_0-\frac{\varepsilon}{R_0^2} \left(\partial ^2_{\theta \theta}\rho(t,\theta) +\rho(t,\theta)\right)+o (\varepsilon^2),
	\end{eqnarray*}
	hence
	\begin{eqnarray*}
		\widetilde{\kappa}(t,\theta)&=&-\frac{1}{R_0^2} \left(\partial ^2_{\theta \theta}\rho(t,\theta) +\rho(t,\theta)\right)+o (\varepsilon) \nonumber\\
		&=&\frac{1}{R_0^2} \sum_{m\ge 0} (m^2-1)\left(\widetilde{R}_{\text{c}m} (t) \cos (m\theta)+ \widetilde{R}_{\text{s}m}(t) \sin(m\theta)\right)+o (\varepsilon). \label{eq:delta kappa}
	\end{eqnarray*}

	Furthermore,
	\begin{eqnarray*}
		\widetilde{P}(t,r,\theta)_{|\partial D(t)}&=& \widetilde{P}(t,R_0,\theta)+o(\varepsilon)\\
		&=& \sum_{m\ge 0} R_0^m\left(a_m (t) \cos (m\theta)+ b_m(t) \sin(m\theta)\right)+o (\varepsilon),
	\end{eqnarray*}
	and we deduce that for all $m\in \N$
	\begin{eqnarray}
		a_m(t)=\left(\gamma (m^2-1)R_0^{-2-m}-\frac{k_d}{2}R_0^{1-m}\right) \widetilde{R}_{\text{c}m} (t), \label{eq:Am} \\
		b_m(t)=\left(\gamma (m^2-1)R_0^{-2-m}-\frac{k_d}{2}R_0^{1-m}\right) \widetilde{R}_{\text{s}m} (t). \label{eq:Bm}
	\end{eqnarray}

	We proceed by expressing a linearized version of the kinematic condition $V_\text{n}=u\cdot n-v_p$ on $\partial D(t)$. This task consists in finding the flow $u$, the normal $n$, and the velocity of the sharp interface $V_\text{n}$ in terms of of the linear perturbations.
	The flow is given by $u=-\nabla \delta P $, and thus
	\[
	u\cdot n= u_0\cdot n + \varepsilon \widetilde{u}\cdot n.
	\]
	Since $\{(r,\theta)\in \R_+ \times (-\pi,\pi] \textrm{ s.t. } r-R(t,\theta)=0)\}$ defines $\partial D(t)$, we have
	\[
	n(t,\theta)=\frac{\nabla\left(r-R(t,\theta)\right)}{\|\nabla\left(r-R(t,\theta)\right)\|}.
	\]
	Moreover,
	\begin{eqnarray*}
		\|\nabla\left(r-R(t,\theta)\right)\|^{-1}&=&\left( 1+\left( \frac{\varepsilon}{r}\partial_\theta \rho(t,\theta)\right)^2\right)^{-1/2}\\
		&=&1+o(\varepsilon^2),
	\end{eqnarray*}
	we have
	\[
	n(t,\theta)=\left(1+o(\varepsilon^2)\right)e_r +o(\varepsilon) e_\theta
	=n_r e_r +n_\theta e_\theta.
	\]
	Furthermore, we know that
	\[
	\widetilde{u}(t,r,\theta)=-\nabla \widetilde{P}(t,r,\theta)= -\partial_r \widetilde{P}(t,r,\theta)e_r-\frac{1}{r} \partial_\theta \widetilde{P}(t,r,\theta)e_\theta,
	\]
	thus
	\begin{equation}
			\left(\widetilde{u}\cdot n\right)_{|\partial D(t)} =   -\sum_m m R_0^{m-1}\left(a_m(t) \cos(m\theta)+b_m\sin(m \theta)\right) +o(\varepsilon).
		\label{eq:lin u}
	\end{equation}

	On the other hand, by definition of the stationary state, we have
	\begin{eqnarray*}
		u_0(r,\theta) &=& -\nabla P_0(r,\theta)\\
		&=& -\partial_r P_0(r,\theta) e_r-\frac1r\partial_\theta P_0(r,\theta) e_\theta\\
		&=&-\frac{k_d}{2} r e_r,
	\end{eqnarray*}
	hence
	\begin{equation}\label{eq:lin u_0}
	\left(u_0\cdot n\right)_{|\partial D(t)}=-\frac{k_d}{2}R_0-\frac{k_d}{2} \varepsilon \rho(t,\theta)+o(\varepsilon^2).
	\end{equation}

	Using Eqs. \eqref{eq:lin u} -- \eqref{eq:lin u_0}, we compute the normal fluid velocity to linear order
	\begin{eqnarray*}
		\left(u\cdot n\right)_{|\partial D(t)}&=&-\frac{k_d}{2}R_0-\varepsilon \frac{k_d}{2}\sum_{m\ge 0} \left(\widetilde{R}_{\text{c}m} (t) \cos (m\theta)+ \widetilde{R}_{\text{s}m} (t)\sin(m\theta)\right)\\
		& &-\varepsilon \sum_{m\ge 0}  m R_0^{m-1} \left(a_m(t) (t) \cos (m\theta)+ b_m(t) \sin(m\theta)\right).
	\end{eqnarray*}
	Using next \eqref{eq:Am}  and \eqref{eq:Bm}, we have
	\begin{eqnarray*}
		\left(u\cdot n\right)_{|\partial D(t)}&=&-\frac{k_d}{2}R_0-\varepsilon \frac{k_d}{2}\sum_{m\ge 0} \left(\widetilde{R}_{\text{c}m} (t) \cos (m\theta)+ \widetilde{R}_{\text{s}m} (t)\sin(m\theta)\right)\\
		& &-\varepsilon \gamma R_0^{-3}\sum_{m\ge 0}  m  (m^2-1)\left( \widetilde{R}_{\text{c}m} (t)\cos (m\theta)+ \widetilde{R}_{\text{s}m} (t)\sin(m\theta)\right)\\
		& & \qquad +\varepsilon \frac{k_d}{2} \sum_{m\ge 0}  m  \left( \widetilde{R}_{\text{c}m} (t)\cos (m\theta)+ \widetilde{R}_{\text{s}m} (t)\sin(m\theta)\right)+o(\varepsilon^2).
	\end{eqnarray*}

	Consequently, on the first hand we have
	\begin{eqnarray*}
		V_\text{n} &=& v_p-\frac{k_d}{2}R_0-\varepsilon \frac{k_d}{2} \sum_{m\ge 0} \left(\widetilde{R}_{\text{c}m} (t) \cos (m\theta)+ \widetilde{R}_{\text{s}m} (t)\sin(m\theta)\right)\\
		& &-\varepsilon \gamma R_0^{-3}\sum_{m\ge 0}  m  (m^2-1)\left( \widetilde{R}_{\text{c}m} (t)\cos (m\theta)+ \widetilde{R}_{\text{s}m} (t)\sin(m\theta)\right)\\
		& & \qquad +\varepsilon \frac{k_d}{2} \sum_{m\ge 0}  m  \left( \widetilde{R}_{\text{c}m} (t)\cos (m\theta)+ \widetilde{R}_{\text{s}m} (t)\sin(m\theta)\right)+o(\varepsilon^2),
	\end{eqnarray*}
	and on the other hand, since $V_\text{n}$ is the normal boundary velocity, we have
	\[
	V_\text{n} n_r = \frac{d R(t,\theta)}{dt} =\partial_t R(t,\theta)+ \frac{V_\text{n} n_\theta}{ R(t,\theta)} \partial_\theta R(t,\theta),
	\]
	and thus
	\begin{eqnarray*}
		V_\text{n}&=&\frac{\partial_t \rho(t,\theta)}{n_r-\frac{\partial_\theta \rho(t,\theta) }{1+\rho(t,\theta)}n_\theta}\\
		&=& \partial_t \rho(t,\theta)+o(\varepsilon^2)\\
		&=&\sum_{m}\left(\partial_t \widetilde{R}_{\text{c}m} (t) \cos (m\theta)+ \partial_t \widetilde{R}_{\text{s}m} (t) \sin(m\theta)\right) +o(\varepsilon^2).
	\end{eqnarray*}

	The term $v_p-\frac{k_d}{2}R_0$ is zero according to the assumption made. We deduce that for all $m\in \N$
	\begin{align*}
		\partial_t \widetilde{R}_{\text{c}m} (t)=\left( \frac{k_d}{2}(m-1)+\gamma R_0^{-3}m (m^2-1)\right)  \widetilde{R}_{\text{c}m} (t), %\label{eq:dRcos/dt}
\\
		\partial_t \widetilde{R}_{\text{s}m} (t)=\left( \frac{k_d}{2}(m-1)-\gamma R_0^{-3}m (m^2-1)\right)  \widetilde{R}_{\text{s}m} (t). %\label{eq:dRsin/dt}
	\end{align*}

\end{proof}

\begin{remark}
	Equation \eqref{eq:dispersion} shows that the stabilizing effect of surface tension is proportional to $m^3$ for large $m$.
\end{remark}

\begin{proposition}
%	A bifurcation occurs for those values of 1⁄4n 1=1⁄22nðnþ1Þ,with n>1, for which !ðnÞ 1⁄4 0.
 In the case where there exists an integer $n$ such that $\frac{2\gamma}{k_d R_0^3}=\frac{1}{n(n+1)}$, all modes $m<n$ are unstable in \eqref{eq:dispersion}.
\end{proposition}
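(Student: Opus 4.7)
The plan is to show directly that the growth rate appearing on the right-hand side of the dispersion relation \eqref{eq:dispersion} is strictly positive for every integer $m$ with $2\leq m<n$, when the parameter is tuned as in the hypothesis. The starting point is the explicit expression
\[
\lambda(m) := \frac{k_d}{2}(m-1)-\gamma R_0^{-3} m(m^2-1),
\]
so that a mode is unstable precisely when $\lambda(m)>0$.

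The key algebraic step is to factor out $(m-1)$: since $m^2-1=(m-1)(m+1)$, one rewrites
\[
\lambda(m)=\frac{k_d}{2}(m-1)\left[1-\frac{2\gamma}{k_d R_0^{3}}\,m(m+1)\right].
\]
Inserting the hypothesis $\dfrac{2\gamma}{k_d R_0^3}=\dfrac{1}{n(n+1)}$ turns the bracketed factor into $\dfrac{n(n+1)-m(m+1)}{n(n+1)}$, so that
\[
\lambda(m)=\frac{k_d}{2}\,(m-1)\,\frac{n(n+1)-m(m+1)}{n(n+1)}.
\]
From here it only remains to discuss the sign of each factor. For $m\geq 2$ one has $m-1>0$, and because the map $\ell\mapsto \ell(\ell+1)$ is strictly increasing on the positive integers, the numerator $n(n+1)-m(m+1)$ is strictly positive iff $m<n$. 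Hence $\lambda(m)>0$ for every $2\leq m<n$, which is the instability claim.

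For completeness I would also note the two boundary cases: mode $m=0$ gives $\lambda(0)=-k_d/2<0$ (a purely stabilising effect of depolymerisation on uniform dilations), and mode $m=1$ gives $\lambda(1)=0$, corresponding to the translational invariance of the problem. I expect no real obstacle: the argument is a one-line factorisation plus a monotonicity observation. The only point worth being careful with is the exact form of the dispersion relation used, since the two equations obtained at the very end of the proof of Proposition \ref{prop-ex-pb-lin} differ by a sign in front of the surface-tension term; the statement to be proved refers to \eqref{eq:dispersion}, so that is the formula I would use throughout.
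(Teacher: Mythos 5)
Your proof is correct and follows essentially the same route as the paper: factor the growth rate as $(m-1)\bigl(\tfrac{k_d}{2}-\gamma R_0^{-3}m(m+1)\bigr)$ and read off the sign from the monotonicity of $\ell\mapsto\ell(\ell+1)$ under the hypothesis on $\tfrac{2\gamma}{k_dR_0^3}$. You merely spell out the sign discussion (and the $m=0,1$ edge cases) that the paper leaves implicit with ``from which the assertion follows.''
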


\begin{proof}
	We observe that
	\begin{eqnarray*}
	\frac{k_d}{2}(m-1)-\gamma R_0^{-3}m (m^2-1) =(m-1)\left(\frac{k_d}{2}-\gamma R_0^{-3}m (m+1)\right)  ,
	\end{eqnarray*}
	from which the assertion follows.
\end{proof}

\subsection{Reformulation of the problem and reduction to the fixed boundary problem}

Let us reformulate the problem \eqref{eq:model} in a suitable way. After the transformation
$$
v=v'-\kappa_d\frac{r}{2},\quad P=P'+\xi k_d\frac{|r|^2}{4},
$$
we reduce the system \eqref{eq:model} to
\begin{align*}
\xi v=-\nabla P, &\quad \mbox{ in }D(t),\\
\nabla\cdot v=0, &\quad \mbox{ in }D(t),\\
P=\gamma\kappa-\frac{\xi k_d}{4}|r|^2, &\quad \mbox{ on }\partial D(t),\\
V_n=v\cdot n-\frac{k_d}{2}r\cdot n-v_p, &\quad \mbox{ on }\partial D(t).
\end{align*}
The third equation is known as the stress-free boundary condition, and the fourth one as the kinematic equation.

Next, let us normalize the constants in terms of the surface tension. For that, consider $k_d=2v_p=\xi=1$ and $\beta=\frac{k_d\gamma}{\xi v_p^2}=4\gamma$, where $\beta$ will be a free parameter in $\R$. Hence, the above system agrees with
\begin{align*}
v=-\nabla P, &\quad \mbox{ in }D(t),\\
\nabla\cdot v=0, &\quad \mbox{ in }D(t),\\
P=\frac{\beta}{4}\kappa-\frac{1}{4}|r|^2, &\quad \mbox{ on }\partial D(t),\\
V_n=v\cdot n-\frac{1}{2}r\cdot n-\frac12, &\quad \mbox{ on }\partial D(t).
\end{align*}

From the incompressibility condition, we know that $v=\nabla^\perp\psi$, where $\psi$ is called the stream function. Then, using the first equation we arrive to
$$
v=\nabla^\perp \psi=-\nabla P, \quad \mbox{in } D(t).
$$
Then $\psi$ and $P$ are harmonic conjugates, and its value at the boundary is related through the Hilbert transform $\mathcal{H}$ as
$$
\psi=\mathcal{H}[P], \quad \mbox{on } \partial D(t),
$$
where
\begin{equation}\label{eq:hilbert}
\mathcal{H}[f(e^{is})](e^{i\theta})=\frac{1}{2\pi}\int_0^{2\pi}f(e^{is})\cot((\theta-s)/2)ds.
\end{equation}

%
%\blu{(remove! add later)In particular, we will exploit its continuity,   i.e. if $f\in C^{n,\alpha}(\mathbb{T})$, then  $\mathcal{H}[f]\in C^{n,\alpha}(\mathbb{T})$, see \cite[Theorem 1]{BP} (and \cite[Section 1]{C}).}
%

Notice that hence the equations in $D$ are solved via the Hilbert transform, and it remains to study the equations at the boundary. Then, we arrive now to a free boundary problem for $\partial D$:
\begin{align*}
P=\frac{\beta}{4}\kappa-\frac{1}{4}|r|^2, &\quad \mbox{ on }\partial D(t),\\
V_n=v\cdot n-\frac{1}{2}r\cdot n-\frac12, &\quad \mbox{ on }\partial D(t),\\
\psi=\mathcal{H}[P], &\quad \mbox{on } \partial D(t).
\end{align*}

Note that the first term of the normal interface velocity can be computed through the Hilbert transform \eqref{eq:hilbert}
$$
n\cdot v=\vec{t}\cdot \nabla\psi=\frac{1}{|z'(\theta)|}\partial_\theta \psi(z(\theta))=\frac{1}{|z'(\theta)|}\partial_\theta \mathcal{H}[P](z(\theta))=-\frac14 \frac{1}{|z'(\theta)|} \partial_\theta \mathcal{H}[|r|^2-\beta\kappa ](z(\theta)),
$$
where $z(\theta)$ is a parametrization of $\partial D(t)$.

In order to   {reduce the free boundary problem to a fixed boundary one,} let us parametrize it using a conformal map \cite{13}. From the Riemann mapping theorem, we know that if $D(t)\neq \R^2$ is a nonempty bounded simply connected domain, there is a unique conformal map $\Phi_t:\D\mapsto D(t).$ In particular, $\Phi_t$ maps $\T$ into $\partial D(t)$ and hence we have the following parametrization for the boundary:
$$
\theta\mapsto \Phi_t(e^{i\theta}).
$$
Using the parametrization we write
\begin{align*}
r=&\Phi_t(w)=\Phi_t(e^{i\theta}), \quad w\in \T, \theta\in[0,2\pi],\\
n=&-\frac{w\Phi'_t(w)}{|\Phi'_t(w)|},\\
-\frac12 r\cdot n=&-\frac12 \Phi_t(w)\cdot \left(-\frac{w\Phi'_t(w)}{|\Phi'_t(w)|}\right)=\frac{1}{2|\Phi'_t(w)|}\mbox{Re}\left[w\overline{\Phi_t(w)}\Phi'_t(w)\right].
\end{align*}
Hence the equations follows as
\begin{align*}
P=&\frac{\beta}{4} \kappa[\Phi_t]-\frac14 |\Phi_t(w)|^2,\\
V_n=&-\frac14 \frac{1}{|\Phi'_t(w)|} \partial_\theta \mathcal{H}\left\{|\Phi_t|^2-\beta \kappa[\Phi_t] \right\}(\Phi_t(e^{i\theta}))+\frac{1}{2|\Phi'_t(w)|}\mbox{Re}\left[w\overline{\Phi_t(w)}\Phi'_t(w)\right]-\frac12.
\end{align*}
On the other hand, the curvature can be written as
$$
\kappa[\Phi_t](w)=\frac{1}{|\Phi_t'(w)|}\mbox{Re}\left[1+w\frac{\Phi_t''(w)}{\Phi_t'(w)}\right].
$$

Now, $V_n$ (the normal velocity to the interface) can be written as
$$
V_n=-\partial_t \Phi_t \cdot \frac{w\Phi'_t(w)}{|\Phi'_t(w)|}=-\frac{1}{|\Phi'_t(w)|}\mbox{Re}\left[\overline{\partial_t \Phi_t}w\Phi'_t(w)\right].
$$
Notice in the above formula that $\Phi_t$ depends on $t$. However, here we will be interested in solutions that translate along the horizontal axis and then
$$
\Phi_t(w)=\phi(w)+Vt,\quad \partial_t \Phi_t=V\in\R,
$$
for some $V\in\R$, obtaining
$$
V_n=-\frac{1}{|\phi'(w)|}\mbox{Re}\left[Vw\phi'(w)\right].
$$
The idea of the work will be to perform a perturbation argument around the unit disc, which happens to be a trivial traveling wave solution (indeed, we will prove that it is stationary). Hence, we shall write the conformal map $\phi$ as
$$
\phi(w)=w+\mu w+f(w), \quad w\in\T,
$$
with $\mu\in\R$ and where
\[
f(w)=\sum_{n\geq 2} a_n w^{n+1},
\]
and $a_n\in\R$. Hence, using the translational symmetry of the kinematic condition,  it agrees with
$$
F(V,\beta,\mu, f)(\theta)=0, \quad \theta\in[0,2\pi],
$$
being $F$ defined as
\begin{equation}\label{eq:F}
\begin{aligned}
&F(\beta,V,\mu, f)(\theta)\\
&=\mbox{Re}\left[Vw\phi'(w)+\frac{1}{4}\beta\partial_\theta \mathcal{H}[\kappa[\phi(w)]](\theta)-\frac14\partial_\theta\mathcal{H}[|\phi(w)|^2](\theta)   +\frac12 w \overline{\phi(w)}\phi'(w)-\frac12 |\phi'(w)|\right],
\end{aligned}
\end{equation}
with
$$
\kappa[\phi(w)]=\frac{1}{|\phi'(w)|}\mbox{Re}\left[1+w\frac{\phi''(w)}{\phi'(w)}\right].
$$

\begin{remark}
  {Notice that from the definition of $f$ we are excluding the second Fourier mode. That is coherent with Casademunt work, where they find some degeneracy in the second mode. Here we can exclude it directly from the function spaces.} Moreover, we shall prove that the only disc being a trivial solution to $F=0$ is the unit one. However, we need to add a dilatation of the disc in the perturbed solution, this is represented by the constant $\mu\in\R$ above.
\end{remark}

In the following proposition, we check that the disc is a stationary solution.
\begin{proposition}\label{prop:0}
If $  {D(0)}$ is the unit disc, then it is a stationary solution. That means the following
$$F(\beta,0,0,0)=0,\quad  \beta\in\R.$$
\end{proposition}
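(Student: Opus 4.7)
The proof is a direct verification by substituting the trivial data $V=0$, $\mu=0$, $f=0$ (so that $\phi(w)=w$) into the expression \eqref{eq:F} and checking that each of the five terms inside the real part either vanishes or cancels against another. Since $\beta$ multiplies a single term that I will show vanishes, the identity will automatically hold for every $\beta\in\R$.

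The plan is to evaluate the five contributions in $F(\beta,0,0,0)$ separately. First, with $\phi(w)=w$, I compute $\phi'(w)=1$ and $\phi''(w)=0$, so that $|\phi'(w)|=1$ and the curvature reduces to
\[
\kappa[\phi(w)]=\frac{1}{|\phi'(w)|}\,\mbox{Re}\!\left[1+w\,\frac{\phi''(w)}{\phi'(w)}\right]=1.
\]
Next, the advective term $Vw\phi'(w)$ is zero since $V=0$. For the two Hilbert-transform terms, I note that on $\T$ one has $|\phi(w)|^2=|w|^2=1$, and by the previous computation $\kappa[\phi(w)]=1$ as well. Therefore both terms reduce to $\partial_\theta \mathcal{H}[1]$, and using the definition \eqref{eq:hilbert} together with the oddness of $\cot((\theta-s)/2)$ around $s=\theta$, the principal value integral gives $\mathcal{H}[1]=0$. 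Consequently $\partial_\theta \mathcal{H}[\kappa[\phi(w)]]=\partial_\theta\mathcal{H}[|\phi(w)|^2]=0$.

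It remains to treat the last two terms. On $\T$ we have $w\overline{\phi(w)}\phi'(w)=w\bar w\cdot 1=|w|^2=1$, so
\[
\mbox{Re}\!\left[\tfrac12 w\overline{\phi(w)}\phi'(w)\right]=\tfrac12,
\]
which exactly cancels $-\tfrac12|\phi'(w)|=-\tfrac12$. Collecting all five contributions,
\[
F(\beta,0,0,0)(\theta)=\mbox{Re}\!\left[0+0-0+\tfrac12-\tfrac12\right]=0,
\]
independently of $\beta$, which is the desired identity. There is no real obstacle here; the only subtle point is invoking the vanishing of the Hilbert transform of a constant as a principal-value identity, which is standard. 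The more interesting observation is structural: the cancellation in the last two terms is precisely what encodes that the unit disc is a genuine stationary (not merely traveling) configuration, and it is this stationarity, together with the dispersion relation obtained in Proposition \ref{prop-ex-pb-lin}, that will later feed into the Crandall--Rabinowitz bifurcation argument announced in the introduction.
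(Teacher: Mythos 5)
Your proof is correct and follows exactly the same route as the paper's: substitute $\phi(w)=w$, observe that $\kappa[\phi]\equiv 1$ and $|\phi(w)|^2\equiv 1$ so the Hilbert transform terms vanish (the Hilbert transform of a constant is zero), and note that the remaining $\tfrac12-\tfrac12$ cancels. Your version simply spells out the intermediate computations that the paper leaves implicit.
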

\begin{proof}
Notice that
$$
F(\beta,0,0,0)(\theta)=\mbox{Re}\left[\frac{1}{4}\beta\partial_\theta \mathcal{H}[1](\theta)-\frac14\partial_\theta\mathcal{H}[1](\theta)   +\frac12 -\frac12 \right].
$$
Since the Hilbert transform of a constant vanishes, we easily get that $F(\beta,0,0,0)\equiv 0$.
\end{proof}

Moreover, following the computations above we find
\begin{equation*}\label{Fmu}
F(\beta,0,\mu,0)(\theta)=\frac12 (1+\mu)\mu,
\end{equation*}
which is only vanishing for $\mu=0$, meaning that $\Phi(w)=w$, or for $\mu=-1$ referring to $\Phi(w)=0$, which is not possible. Hence, the only possible trivial solution happens to $\mu=0$.

\subsection{Function spaces and well-definition of $F$}

Let us emphasize again that the functional $F$ is invariant under translations, which is a consequence of the translational symmetry of the kinematic condition stated before. That is the reason to exclude the constants from the definition of $f$.

\begin{proposition}\label{prop:trans}
The functional $F$ is invariant under translations, that is,
$$
F(\beta,V,\mu,f+a)=F(\beta,V,\mu,f), \quad a\in\R.
$$
\end{proposition}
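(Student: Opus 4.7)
The plan is to observe that adding a real constant $a$ to $f$ simply shifts the conformal map $\phi(w)=(1+\mu)w+f(w)$ to $\phi(w)+a$, i.e.\ it corresponds to a rigid translation of $D(t)$ by $a\in\R$ along the real axis. Since the underlying PDE system is translation invariant, $F$ should be invariant too; I want to verify this by tracking term-by-term which pieces of \eqref{eq:F} see the constant.

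First, I would notice that $\phi'$ and $\phi''$ are unchanged under $\phi\mapsto\phi+a$, so the quantities $Vw\phi'(w)$, $|\phi'(w)|$ and the curvature $\kappa[\phi]$ are all unaffected. Hence the first term, the fifth term, and the whole $\mathcal{H}[\kappa[\phi]]$ contribution drop out of the difference $F(\beta,V,\mu,f+a)-F(\beta,V,\mu,f)$. The only two pieces that change are $|\phi|^2$ and $w\overline{\phi}\phi'$. Expanding, and using $a\in\R$,
\[
|\phi+a|^2 = |\phi|^2+2a\,\mathrm{Re}\,\phi+a^2,\qquad w\overline{(\phi+a)}\phi' = w\bar\phi\,\phi'+a\,w\phi',
\]
so that
\[
F(\beta,V,\mu,f+a)-F(\beta,V,\mu,f) = \mathrm{Re}\left[-\tfrac{1}{4}\partial_\theta\mathcal{H}\bigl[2a\,\mathrm{Re}\,\phi+a^2\bigr](\theta)+\tfrac{a}{2}w\phi'(w)\right].
\]

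The main (and only) substantive step is then to exploit that $\phi$ is holomorphic in $\D$ with $\phi(0)\in\R$, so that its real and imaginary boundary traces are conjugate harmonic, giving the identity $\mathcal{H}[\mathrm{Re}\,\phi](e^{i\theta})=\mathrm{Im}\,\phi(e^{i\theta})$ on $\T$. Together with $\mathcal{H}[\mathrm{const}]=0$ (which kills the $a^2$ term) and the boundary chain rule $\partial_\theta \phi(e^{i\theta})=iw\phi'(w)$, one obtains
\[
\partial_\theta \mathcal{H}[\mathrm{Re}\,\phi](\theta)=\partial_\theta\,\mathrm{Im}\,\phi(e^{i\theta})=\mathrm{Im}(iw\phi'(w))=\mathrm{Re}(w\phi'(w)).
\]
Plugging this in,
\[
F(\beta,V,\mu,f+a)-F(\beta,V,\mu,f)=-\tfrac{a}{2}\,\mathrm{Re}(w\phi'(w))+\tfrac{a}{2}\,\mathrm{Re}(w\phi'(w))=0,
\]
which proves the claim. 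The only delicate point is the identification $\mathcal{H}[\mathrm{Re}\,\phi]=\mathrm{Im}\,\phi$, which is purely a matter of Fourier expansion $\phi(w)=\sum_{k\ge 0}c_k w^k$ with $c_0\in\R$ and the action of $\mathcal{H}$ on $e^{\pm ik\theta}$; everything else is bookkeeping.
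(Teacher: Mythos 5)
Your proof is correct and follows essentially the same route as the paper: isolate the two terms that actually see the constant $a$, use $\mathcal{H}[\mathrm{const}]=0$ to kill the $a^2$ piece, and reduce everything to the identity $\partial_\theta\mathcal{H}[\mathrm{Re}\,\phi](\theta)=\mathrm{Re}(w\phi'(w))$. The paper verifies that identity by an explicit term-by-term Fourier computation (citing $\mathcal{H}[\cos(n\theta)]=\sin(n\theta)$), whereas you package the same computation as the conjugate-function identity $\mathcal{H}[\mathrm{Re}\,\phi]=\mathrm{Im}\,\phi$ (valid here since $\phi(0)=0$) combined with $\partial_\theta\phi(e^{i\theta})=iw\phi'(w)$ --- a purely cosmetic difference, with no gaps.
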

\begin{proof}
Denote by $\phi_a=\phi_0+a$, being $\phi_0=(1+\mu)w+f$. Note that  $\phi_a'=\phi_0'$.
Hence
\begin{align*}
&F(\beta,V,\mu, f+a)(\theta)\\
&=\mbox{Re}\left[Vw\phi_0'(w)+\frac{1}{4}\beta\partial_\theta \mathcal{H}[\kappa[\phi_a]](\theta)-\frac14\partial_\theta\mathcal{H}[|\phi_a(w)|^2](\theta)  +\frac12 w \overline{\phi_a(w)}\phi_0'(w)-\frac12 |\phi_0'(w)|\right].
\end{align*}
Notice that
$$
|\phi_a(w)|^2=|\phi_0|^2+2a\textnormal{Re}[\phi_0(w)]+a^2,
$$
and since the Hilbert transform of constants vanishes, we do not see the contribution of the last term, that implies
\begin{align*}
&F(\beta,V,\mu, f+a)(\theta)\\
&=\mbox{Re}\left[Vw\phi_0'(w)+\frac{1}{4}\beta\partial_\theta \mathcal{H}[\kappa[\phi_a]](\theta)-\frac14\partial_\theta\mathcal{H}[|\phi_0(w)|^2](\theta)-\frac12 a \partial_\theta\mathcal{H}[\textnormal{Re}[\phi_0]](\theta) +a\frac12 w \phi_0'(w)\right.\\
&\q\left.  +\frac12 w \overline{\phi_0(w)}\phi_0'(w)-\frac12 |\phi_0'(w)|\right].
\end{align*}
For the same reason, we have that $\kappa[\phi_a]=\kappa[\phi_0]$, then
\begin{align*}
F(\beta,V,\mu, f+a)(\theta)=\mbox{Re}&\left[Vw\phi_0'(w)+\frac{1}{4}\beta\partial_\theta \mathcal{H}[\kappa[\phi_0]](\theta)-\frac14\partial_\theta\mathcal{H}[|\phi_0(w)|^2](\theta)-\frac12 a \partial_\theta\mathcal{H}[\textnormal{Re}[\phi_0]](\theta) \right.\\
&\q\left.  +a\frac12 w \phi_0'(w)+\frac12 w \overline{\phi_0(w)}\phi_0'(w)-\frac12 |\phi_0'(w)|\right].
\end{align*}
In the following, let us check that
\begin{equation}\label{translat-estim}
 \textnormal{Re}[w\phi_0'(w)]-\partial_\theta \mathcal{H}[\textnormal{Re}[\phi_0]](\theta)=0,
\end{equation}
in order to check that it does not depend on $a$. We will check it by using its Fourier expression. Notice that
$$
\phi_0(w)=(1+\mu)w+\sum_{n\geq 2}a_n w^{n+1},
$$
and thus
$$
w\phi_0'(w)=(1+\mu)w+\sum_{n\geq 2}a_n(n+1)w^{n+1}.
$$
That implies
\begin{equation*}\label{eq:Rewphi'}
\textnormal{Re}[w\phi_0'(w)]=(1+\mu)\cos(\theta)+\sum_{n\geq 2}a_n(n+1)\cos((n+
1)\theta).
\end{equation*}
On the other hand, using the result in  \cite[Section 9.3.1, eq. (9.3.8)]{BN}, we get
$$
\mathcal{H}[\textnormal{Re}[\phi_0]](\theta)=(1+\mu)\sin(\theta)+\sum_{n\geq 2}a_n \sin((n+1)(\theta),
$$
and thus
$$
\partial_\theta \mathcal{H}[\textnormal{Re}[\phi_0]](\theta)=(1+\mu)\cos(\theta)+\sum_{n\geq 2}a_n(n+1)\cos((n+1)\theta),
$$
implying \eqref{translat-estim}. Hence, we conclude $F(\beta,V,\mu, f+a)\equiv F(\beta,V,\mu, f)$.
\end{proof}

For $\alpha\in(0,1)$, define the following function spaces

\begin{align}
X^\alpha:=&\left\{f\in C^{3,\alpha}(\T), \quad f(w)=\sum_{n\geq 2}a_n w^{n+1}, \quad a_n\in\R\right\}\label{eq:X}\\
Y^\alpha:=&\left\{f\in C^{0,\alpha}(\T), \quad f(e^{i\theta})=\sum_{n\geq 0}a_n \cos(n\theta), \quad a_n\in\R\right\}\label{eq:Y}
\end{align}
Let us also define $B_{X^{  {\alpha}}}(\rho)$ as the ball in $X^{  {\alpha}}$ centered at 0 of radius $\rho.$

The main difficulty of this work relies on the function spaces. We shall observe later that we can write the linearized operator as
$$\partial_{f} F(\beta,0,0,0)[h](\theta)=\sum_{n\geq 2}\tilde{F}_n \cos(n\theta),$$
however we can not exclude the zero and first Fourier modes in the expression of the nonlinear operator $F$. That implies that the range of the linearized operator will not be closed in the range space, which does not agree with the needed properties to perform a perturbative argument.

In order to tackle that problem, we use the free constants $\mu$ referring to dilatations of the disc and $V$ related to the speed. Instead of working with $\partial_f F$, we shall include in the linearized operator derivatives with respect to $\mu$ and $V$. That will help us to find non trivial zero and first Fourier modes in the linearized operator.

In the following proposition we check that ${F}$ is well-defined and $C^1$.

\begin{proposition}\label{prop:welldefined}
For $\rho<1$, the operator
$ F:\R^3 \times   {B_{X^\alpha}(\rho)}\rightarrow Y^\alpha$  is well-defined and $C^1$.
\end{proposition}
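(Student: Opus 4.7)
The plan is to decompose $F$ in \eqref{eq:F} into elementary pieces, check that each piece takes values in $C^{0,\alpha}(\T)$ with the required evenness in $\theta$, and then invoke the fact that each piece depends smoothly on $(\beta,V,\mu,f)$. Throughout, I implicitly restrict $\mu$ to the open set $|1+\mu|>\rho$, so that the conformal map $\phi(w)=(1+\mu)w+f(w)$ has non-vanishing derivative on $\T$.

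\textbf{Non-vanishing of $\phi'$ and Hölder regularity of $F$.} The embedding $X^{\alpha}\hookrightarrow C^{1}(\T)$ yields $\|f'\|_{\infty}\le C\rho$, so $|\phi'(w)|\ge |1+\mu|-C\rho>0$ uniformly on $\T$. Consequently both $|\phi'|=\sqrt{\phi'\overline{\phi'}}$ and $1/|\phi'|$ belong to $C^{2,\alpha}(\T)$. Working term by term in \eqref{eq:F}, $Vw\phi'(w)$ and $w\overline{\phi(w)}\phi'(w)$ are $C^{2,\alpha}$; the quantity $|\phi(w)|^{2}$ is $C^{3,\alpha}$, hence $\partial_{\theta}\mathcal{H}[|\phi|^{2}]\in C^{2,\alpha}$, because the Hilbert transform \eqref{eq:hilbert} is bounded on $C^{k,\alpha}(\T)$ for every $k\ge 0$; and the curvature involves $\phi''\in C^{1,\alpha}$ divided by the non-vanishing $\phi'$, so $\kappa[\phi]\in C^{1,\alpha}(\T)$ and $\partial_{\theta}\mathcal{H}[\kappa[\phi]]\in C^{0,\alpha}(\T)$. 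The curvature is the regularity bottleneck, and altogether $F(\beta,V,\mu,f)\in C^{0,\alpha}(\T)$.

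\textbf{Cosine structure.} Because every $a_{n}$ is real, $\phi(\bar{w})=\overline{\phi(w)}$ and $\phi'(\bar{w})=\overline{\phi'(w)}$ for $w\in\T$. A direct inspection shows that each of the real-valued quantities $\textnormal{Re}[Vw\phi'(w)]$, $|\phi(w)|^{2}$, $\kappa[\phi](w)$, $\textnormal{Re}[w\overline{\phi(w)}\phi'(w)]$ and $|\phi'(w)|$ is invariant under $\theta\mapsto-\theta$, that is, has a pure-cosine Fourier series. Because the Hilbert transform sends $\cos(k\theta)$ to $\sin(k\theta)$ and annihilates constants, the operator $\partial_{\theta}\mathcal{H}$ sends cosine series back to cosine series. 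Hence $F(\beta,V,\mu,f)$ is even in $\theta$ and genuinely belongs to $Y^{\alpha}$.

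\textbf{Smoothness in the parameters.} The map $F$ is the composition of: the affine map $(\mu,f)\mapsto\phi\in C^{3,\alpha}(\T)$; the bounded linear operators $\mathcal{H}$ and $\partial_{\theta}$ on $C^{k,\alpha}(\T)$; pointwise multiplication of Hölder functions; the reciprocal and square-root operations applied to functions that remain strictly positive on the open domain considered; and affine dependence on $(\beta,V)$. Each such operation is $C^{\infty}$ between the appropriate Hölder spaces (product rule together with analyticity of $x\mapsto 1/x$ and $x\mapsto\sqrt{x}$ away from zero), hence $F$ is $C^{\infty}$, and in particular $C^{1}$. The main obstacle I anticipate is controlling the quotient $\phi''/\phi'$ and the square root $\sqrt{\phi'\overline{\phi'}}$ as Fréchet-differentiable maps between Hölder spaces; this reduces to the standard Banach-space chain rule once the uniform lower bound $|\phi'|\ge |1+\mu|-C\rho>0$ has been secured on $B_{X^{\alpha}}(\rho)$.
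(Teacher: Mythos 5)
Your proof is correct and, in its first two steps, follows the same outline as the paper: the regularity count (curvature is the bottleneck, $\kappa[\phi]\in C^{1,\alpha}$, so $\partial_\theta\mathcal{H}[\kappa[\phi]]\in C^{0,\alpha}$ via the boundedness of the Hilbert transform on H\"older spaces) and the verification of the pure-cosine structure via the symmetry $\phi(\overline{w})=\overline{\phi(w)}$ are essentially identical to Steps 1 and 2 of the paper's proof. Where you genuinely diverge is in the differentiability step: the paper computes the G\^ateaux derivatives $\partial_f F$, $\partial_\mu F$, $\partial_V F$ and $\partial_\beta F$ explicitly and checks term by term that each lands in $Y^\alpha$, whereas you obtain $C^\infty$ (hence $C^1$) abstractly, writing $F$ as a composition of affine maps, bounded linear operators, H\"older-algebra products, and the analytic operations $x\mapsto 1/x$ and $x\mapsto\sqrt{x}$ away from zero. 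Your route is cleaner and gives more (smoothness of all orders), but the paper's explicit formulas for the partial derivatives are not wasted effort: they are exactly what is needed later in Propositions \ref{prop:paF} and \ref{prop-fourier} to diagonalize the linearized operator, so the explicit computation has to be done somewhere regardless. One point in your favour worth highlighting: you correctly observe that the argument requires $\phi'$ to be bounded away from zero on $\T$, and you secure this by restricting to $|1+\mu|>C\rho$; the paper's statement, taken literally with $\mu$ ranging over all of $\R$, would fail at $\mu=-1$ (where $\phi'=f'$ can vanish), and the paper only acknowledges this implicitly with the condition $\norm{f}_{X^\alpha}\ll 1$ in the $\partial_\mu$ computation. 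Since the proposition is only ever applied near $\mu=0$, this is a presentational defect of the statement rather than a flaw in either proof, but your handling of it is the more careful one.
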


\begin{proof}

We split this proof in three parts. We first prove that ${F}\in C^{0,\alpha}$ if $f\in X^\alpha$   {with $\norm{f}_{X^\alpha}\le \rho<1$}, and then that  ${F}\in Y^\alpha$. In the last part of this proof, we show the $C^1$ regularity.\\

\noindent
{\it $\bullet$ Step 1:  ${F}\in C^{0,\alpha}$.}\\
We recall the expression of $F$ in \eqref{eq:F} with
\begin{align*}
&{F}(\beta,V,\mu,f)(\theta)\\
&=\mbox{Re}\left[Vw\phi'(w)+\frac{1}{4}\beta\partial_\theta \mathcal{H}[\kappa[\phi(w)]](\theta)-\frac14\partial_\theta\mathcal{H}[|\phi(w)|^2](\theta)   +\frac12 w \overline{\phi(w)}\phi'(w)-\frac12 |\phi'(w)|\right],
\end{align*}
and we observe that
\begin{equation*}\label{eq:F-terms}
\mbox{Re}\bra{w\phi'(w)},\qquad\frac12 \mbox{Re}\bra{w \overline{\phi(w)}\phi'(w)},\qquad \frac12 |\phi'(w)|
\end{equation*}
 are $C^{2,\alpha}$ because $\phi=(1+{\mu})w+f$ by definition of $\phi$.

Now, we use the continuity of the Hilbert transform,   i.e. if $f\in C^{n,\alpha}(\mathbb{T})$, then  $\mathcal{H}[f]\in C^{n,\alpha}(\mathbb{T})$, see \cite[Theorem 1]{BP} (and \cite[Section 1]{C}). Hence, it implies that, since $\kappa[\phi]$ is in  $C^{1,\alpha}(\mathbb{T})$, the same holds for its Hilbert transform. Consequently, its derivative belongs to $C^{0,\alpha}(\mathbb{T})$.

\medskip\noindent
{\it $\bullet$ Step 2:  ${F}\in Y^{\alpha}$.}\\
We need to prove that $F$ can be decompose as a Fourier series in consines, that is,
$$
F(\beta,V, \mu,f)=\sum_{n\geq 0}F_n\cos(n\theta), \quad F_n\in\R.
$$
This can be done showing that
\[
F(\beta,V, \mu,f)(\theta)=F(\beta,V, \mu,f)(-\theta).
\]
The expression of $F(\beta,V, \mu,f)(-\theta)$ is given by
\begin{align*}
&F(\beta,V, \mu,f)(-\theta)\\
&=\mbox{Re}\left[V\ww\phi'(\ww)+\frac{1}{4}\beta\partial_\theta \mathcal{H}[\kappa[\phi(w)]](-\theta)-\frac14\partial_\theta\mathcal{H}[|\phi(w)|^2](-\theta)   +\frac12 \ww \overline{\phi(\ww)}\phi'(\ww)-\frac12 |\phi'(\ww)|\right],
\end{align*}
Since
\[
\phi(\overline{w})=\overline{\phi(w)},
\]
we  have that %the terms in \eqref{eq:F-terms} verifies
\begin{align*}
\mbox{Re}\bra{\overline{w}\phi'(\overline{w})}&=\mbox{Re}\bra{\overline{w\phi'(w)}}=\mbox{Re}\bra{w\phi'(w)},\nonumber
\\
\mbox{Re}\bra{\ww \overline{\phi(\overline{w})}\phi'(\overline{w})}&= \mbox{Re}\bra{\overline{w \overline{\phi(w)}\phi'(w)}}= \mbox{Re}\bra{w \overline{\phi(w)}\phi'(w)},%\label{eq:cos1}
\\
 |\phi'(\overline{w})|&=|\overline{\phi'(w)}|= |\phi'(w)|.%\label{eq:cos2}
\end{align*}
 We now focus on
\[
\partial_\theta\mathcal{H}[|\phi(w)|^2](-\theta)  ,\qq
\partial_\theta \mathcal{H}[\kappa[\phi(w)]](-\theta) ,
\]
beginning  with
\begin{align*}
\mathcal{H}[|\phi(w)|^2](-\theta) &=-\frac{1}{2\pi}\int_{0}^{2\pi}\av{\phi\pare{e^{is}}}^2\cot\pare{\theta+s}\,ds
\\
&=-\frac{1}{2\pi}\int_{0}^{2\pi}\av{\phi\pare{e^{-i\tilde{s}}}}^2\cot\pare{\theta-\tilde{s}}\,d\tilde{s}
\\
&=-\mathcal{H}[|\phi(\ww)|^2](\theta)
\\
&=-\mathcal{H}[|\phi(w)|^2](\theta).
\end{align*}
Reasoning in the same way, we have that
\[
\mathcal{H}[\kappa[\phi(w)]](-\theta) =-\mathcal{H}[\kappa[\phi(\ww)]](\theta) =-\mathcal{H}[\kappa[\phi(w)]](\theta)
\]
because
\[
\kappa[\phi(\ww)]=\frac{1}{|\phi'(\ww)|}\mbox{Re}\left[1+\ww\frac{\phi''(\ww)}{\phi'(\ww)}\right]=\frac{1}{|\phi'(w)|}\mbox{Re}\left[1+w\frac{\phi''(w)}{\phi'(w)}\right]=\kappa[\phi(w)].
\]
The thesis follows deriving in $\theta$.

\medskip\noindent
{\it $\bullet$ Step 3:  ${F}\in C^1$.}\\
We now focus on the proof of the $C^1$ regularity.  We now compute the partial derivative w.r.t. $f$ of ${F}$, i.e.

We now compute
\[
\pa_f F(\beta,V,{\mu},f)=\frac{d}{d\eps}F_\eps(\beta,V,{\mu},f)(\theta)\biggr|_{\eps=0}
\]
being   $F_\eps$ defined as
\begin{align*}
F_\eps(\beta,V,{\mu},f)(\theta)=\mbox{Re}&\left[Vw\pare{\phi'(w)+\eps h'(w)}+\frac{1}{4}\beta\partial_\theta \mathcal{H}[\kappa[\phi+\eps h]](\theta)-\frac14\partial_\theta\mathcal{H}[|\phi(w)+\eps h(w)|^2](\theta) \right.\\
& \left.  +\frac12 w \overline{\pare{\phi(w)+\eps h(w)}}\pare{\phi'(w)+\eps h'(w)}-\frac12 |\phi'(w)+\eps h'(w)|\right].
\end{align*}
We have that
\begin{align*}
\frac{d}{d\eps}Vw\pare{\phi'(w)+\eps h'(w)}&=Vwh'(w),\\
\frac{d}{d\eps}|\phi(w)+\eps h(w)|^2&=\frac{d}{d\eps}\pare{|\phi(w)|^2+2\eps\t{Re}\bra{\overline{\phi(w)}h(w)}+\eps^2| h(w)|^2}\\
\frac{d}{d\eps} w \overline{\pare{\phi(w)+\eps h(w)}}\pare{\phi'(w)+\eps h'(w)}&=w\pare{\overline{h(w)}\pare{\phi'(w)+\eps h'(w)}+h'(w)\overline{\pare{\phi(w)+\eps h(w)}}}\\
\frac{d}{d\eps}|\phi'(w)+\eps h'(w)|&=\frac{\phi'(w)+\eps h'(w)}{|\phi'(w)+\eps h'(w)|}\cdot h'(w),
\end{align*}
from which
\begin{align*}
%\frac{d}{d\eps}Vw\pare{\phi'(w)+\eps h'(w)}&\biggr|_{\eps=0}=Vwh'(w),\\
%
%
\frac{d}{d\eps}|\phi(w)+\eps h(w)|^2&\biggr|_{\eps=0}=2 \t{Re}\bra{\overline{\phi(w)}h(w)},\\
\frac{d}{d\eps} w \overline{\pare{\phi(w)+\eps h(w)}}\pare{\phi'(w)+\eps h'(w)}
&\biggr|_{\eps=0}=w\pare{\overline{h(w)}\phi'(w)+h'(w)\overline{\phi(w)}},\\
\frac{d}{d\eps}|\phi'(w)+\eps h'(w)|
&\biggr|_{\eps=0}=\frac{\phi'(w)}{|\phi'(w)|}\cdot h'(w).
\end{align*}
We also need to derive
\begin{align*}
\frac{d}{d\eps}\kappa[\phi+\eps h](w)& =\frac{d}{d\eps}\frac{1}{\av{\phi'(w)+\eps h'(w)}}\t{Re}\bra{1+w\frac{\phi''(w)+\eps h''(w)}{\phi'(w)+\eps h'(w)}}\\
& =-\frac{\phi'(w)+\eps h'(w)}{\av{\phi'(w)+\eps h'(w)}^3}h'(w)\t{Re}\bra{1+w\frac{\phi''(w)+\eps h''(w)}{\phi'(w)+\eps h'(w)}}\\
&\q+\frac{1}{\av{\phi'(w)+\eps h'(w)}}\t{Re}\bra{w\frac{h''(w)(\phi'(w)+\eps h'(w))-h'(w)(\phi''(w)+\eps h''(w))}{\av{\phi'(w)+\eps h'(w)}^2}},
\end{align*}
which yields to
\begin{align*}
\frac{d}{d\eps}\kappa[\phi+\eps h](w)
\biggr|_{\eps=0}&=-\frac{\phi'(w)}{\av{\phi'(w)}^3}\cdot h'(w)\t{Re}\bra{1+w\frac{\phi''(w)}{\phi'(w)}}\\
&\q+\frac{1}{\av{\phi'(w)}}\t{Re}\bra{w\frac{h''(w)\phi'(w)-h'(w)\phi''(w)}{\av{\phi'(w)}^2}}\\
&=\kappa_0[\phi].
\end{align*}
The expression of $\partial_f {F}$ follows gathering the previous computations:
\begin{equation*}\label{eq:pafF}
\begin{aligned}
\partial_f  {F}(\beta,V,{\mu},f)[h]&=\t{Re}\left[
Vwh'(w)+\frac{1}{4}\beta\partial_\theta\mathcal{H}[\kappa_0[\phi]](\theta)-\frac{1}{2}\partial_\theta\mathcal{H}\bra{ \overline{\phi(w)}h(w)}(\theta)
\right. \\
&\left.\qq\q+\frac{1}{2}w\pare{\overline{h(w)}\phi'(w)+h'(w)\overline{\phi(w)}}-\frac{1}{2}\frac{\phi'(w)}{|\phi'(w)|}h'(w)
\right].
\end{aligned}
\end{equation*}

%We are ready to check if the terms appearing in $\partial_V {F}(\beta,V,f)\lambda$ and $\partial_f {F}(\beta,V,f)[h]$ have the desired regularity.\\
Since $f$, $h$ and $\phi$ belong to $X^\alpha$,  we have that all the terms composing $\partial_f  {F}(\beta,V,{\mu},f)[h]$, up to $\partial_\theta\mathcal{H}[\kappa_0[\phi]](\theta)$, are in $C^{2,\alpha}$. As far as $\partial_\theta\mathcal{H}[\kappa_0[\phi]](\theta)$ is concerned, we have that $\kappa_0[\phi]$ belongs to $C^{1,\alpha}$, hence $\partial_\theta\mathcal{H}[\kappa_0[\phi]](\theta)$ is in $C^{0,\alpha}$.

Using similar ideas, we deduce
\begin{align*}
\pa_\mu F(\beta,V,{\mu},f)
&=\mbox{Re}\left[Vw-\frac{1}{4}\beta \partial_\theta \mathcal{H}\bra{
\frac{1}{\av{\phi'(w)}^3}\mbox{Re}\bra{\phi'(w)+2w\phi''(w)}
}(\theta)-  \frac12 \partial_\theta\mathcal{H}[\phi(w)w](\theta) \right.\\
&\qq\left.  +1+{\mu}+\frac{wf(\overline{w})+f'(w)}{2}-\frac{\phi'(w)}{2|\phi'(w)|}\right].
\end{align*}
Since $f$ and $\phi$ belong to $X^\alpha$, with $\norm{f}_{X^\alpha}\ll1$, we have that all the terms composing $\partial_\mu  {F}(\beta,V,{\mu},f)[h]$, up to $\partial_\theta\mathcal{H}[\pa_\mu\kappa[\phi]](\theta)$, are in $C^{2,\alpha}$. As far as $\partial_\theta\mathcal{H}[\pa_\mu\kappa[\phi]](\theta)$ is concerned, we have that $\pa_\mu\kappa[\phi]$ belongs to $C^{1,\alpha}$, hence $\partial_\theta\mathcal{H}[\pa_\mu\kappa[\phi]](\theta)$ is in $C^{0,\alpha}$.

%Furthermore, Proposition  {prop:mu} provides us with the continuity of $\partial_f {\mu}(f)[h]$.
%Furthermore, since
%$$
%(\partial_\mu G)(\widehat{\mu},f)\partial_f \widehat{\mu}(f)[h]+\partial_f G(\widehat{\mu},f)[h]=0,
%$$
%with $G$ defined in \eqref{eq:G}, we deduce that
%\begin{align*}
%\pa_f\widehat{\mu}(f)[h]&=-\frac{\partial_f G(\widehat{\mu},f)[h]}{\partial_\mu G(\widehat{\mu},f)}
%\end{align*}
%with $\partial_f G(\widehat{\mu},f)[h]$ and $\partial_\mu G(\widehat{\mu},f)$ as in   \eqref{eq:pafG} and \eqref{eq:pamuG}, respectively.

Finally, let us compute the other derivatives. First, note that
\begin{equation}\label{eq:paVF}
\partial_V {F}(\beta,V,\mu, f)=\t{Re}\bra{w\pare{1+\mu+f'(w)}}.
\end{equation}
Since $f$ and $\phi$ belong to $X^\alpha$, we have that  $\partial_V {F}(\beta,V,\mu, f)$ belongs to in $C^{2,\alpha}$.
Second, we compute the derivative with respect to $\beta$:
$$
\partial_\beta F(\beta,V,\mu,f)(\theta)=\frac14 \textnormal{Re}\left[\partial_\theta \mathcal{H}[\kappa[\phi(w)]](\theta)\right],
$$
which belongs to $Y^\alpha$ following the ideas above.
\end{proof}

\subsection{Crandall-Rabinowitz theorem}
The goal of the work has been reduced to study the nontrivial roots of the nonlinear functional $F$. That is the main task of bifurcation theory. Here, we shall use the so-called Crandall-Rabinowiz theorem, which can be found in \cite{CR}.

\begin{theorem}[Crandall-Rabinowitz Theorem]\label{CR}
    Let $X, Y$ be two Banach spaces, $V$ be a neighborhood of $0$ in $X$ and $F:\mathbb{R}\times V\rightarrow Y$ be a function with the properties,
    \begin{enumerate}  \setlength\itemsep{.5em}
        \item[(CR1)] $F(\lambda,0)=0$ for all $\lambda\in\mathbb{R}$.
        \item[(CR2)] The partial derivatives  $\partial_\lambda F_{\lambda}$, $\partial_fF$ and  $\partial_{\lambda}\partial_fF$ exist and are continuous.
        \item[(CR3)] The operator $\partial_f F(\lambda_0,0)$ is Fredholm of zero index and $\textnormal{Ker}(F_f(\lambda_0,0))=\langle f_0\rangle$ is one-dimensi\-onal.
                \item[(CR4)]  Transversality assumption: $\partial_{\lambda}\partial_fF(\lambda_0,0)f_0 \notin \textnormal{Im}(\partial_fF(\lambda_0,0))$.
    \end{enumerate}
    If $Z$ is any complement of  $\textnormal{Ker}(\partial_fF(\lambda_0,0))$ in $X$, then there is a neighborhood  $U$ of $(\lambda_0,0)$ in $\mathbb{R}\times X$, an interval  $(-a,a)$, and two continuous functions $\Phi:(-a,a)\rightarrow\mathbb{R}$, $\beta:(-a,a)\rightarrow Z$ such that $\Phi(0)=\lambda_0$ and $\beta(0)=0$ and
    $$F^{-1}(0)\cap U=\{(\Phi(s), s f_0+s\beta(s)) : |s|<a\}\cup\{(\lambda,0): (\lambda,0)\in U\}.$$
\end{theorem}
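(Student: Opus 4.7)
The plan is to establish this classical bifurcation theorem via a Lyapunov--Schmidt reduction followed by two applications of the implicit function theorem. By (CR3), the Fredholm operator $\pa_f F(\lambda_0,0)$ has index zero and one-dimensional kernel $\langle f_0\rangle$, so its image $R:=\Img(\pa_f F(\lambda_0,0))$ is closed and has a one-dimensional complement $W$ in $Y$. Fixing a continuous projection $Q:Y\to R$ along $W$, I would parametrize $f\in X$ near $0$ as $f=sf_0+z$ with $s\in\R$ and $z\in Z$, and split $F(\lambda,f)=0$ into the pair
\begin{align*}
    QF(\lambda,sf_0+z)&=0,\\
    (I-Q)F(\lambda,sf_0+z)&=0.
\end{align*}

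First I would handle the range component. Define $G(\lambda,s,z):=QF(\lambda,sf_0+z)$; property (CR1) gives $G(\lambda_0,0,0)=0$, and $\pa_z G(\lambda_0,0,0)=Q\,\pa_f F(\lambda_0,0)|_Z$ is an isomorphism from $Z$ onto $R$, since $\pa_f F(\lambda_0,0)$ restricted to $Z$ has trivial kernel and image equal to $R$. Thanks to the $C^1$ hypothesis (CR2), the implicit function theorem yields a unique $C^1$ map $(\lambda,s)\mapsto z=\zeta(\lambda,s)$ defined in a neighborhood of $(\lambda_0,0)$, with $\zeta(\lambda_0,0)=0$, solving the first equation. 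The trivial-branch hypothesis (CR1) combined with uniqueness forces $\zeta(\lambda,0)=0$ identically.

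Second, I would insert $\zeta$ into the complementary equation and define
\[
    \Psi(\lambda,s):=(I-Q)F(\lambda,sf_0+\zeta(\lambda,s)).
\]
Because $\zeta(\lambda,0)=0$ we have $\Psi(\lambda,0)\equiv 0$, so I can write $\Psi(\lambda,s)=s\,\psi(\lambda,s)$ with $\psi(\lambda,s):=\int_0^1\pa_s\Psi(\lambda,\tau s)\,d\tau$, which inherits $C^1$ regularity from $F$ and $\zeta$. Differentiating $G(\lambda_0,s,\zeta(\lambda_0,s))=0$ in $s$ at $s=0$ gives $\pa_s\zeta(\lambda_0,0)=0$, hence $\psi(\lambda_0,0)=(I-Q)\pa_f F(\lambda_0,0)f_0=0$. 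A further differentiation in $\lambda$ yields
\[
    \pa_\lambda\psi(\lambda_0,0)=(I-Q)\pa_\lambda\pa_f F(\lambda_0,0)f_0,
\]
which, by the transversality assumption (CR4), is a nonzero element of $W\cong\R$. A second application of the implicit function theorem to $\psi=0$ produces a $C^1$ function $\Phi:(-a,a)\to\R$ with $\Phi(0)=\lambda_0$ solving $\psi(\Phi(s),s)=0$, and the curve $s\mapsto(\Phi(s),sf_0+\zeta(\Phi(s),s))$ parametrizes all nontrivial solutions in $U$.

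The main obstacle will be carefully verifying that the factorization $\Psi(\lambda,s)=s\,\psi(\lambda,s)$ produces a $\psi$ that is jointly $C^1$, so that the second implicit function theorem applies; the integral representation above handles this by transferring regularity from $F$. A second delicate point is showing that the neighborhood $U$ may be chosen so that $F^{-1}(0)\cap U$ consists precisely of the trivial branch and this one extra curve, which amounts to observing that the Lyapunov--Schmidt parametrization $(\lambda,s,z)\mapsto(\lambda,sf_0+z)$ is a local homeomorphism and that every solution has either $s=0$ (forcing $z=0$ and hence membership in the trivial branch) or $\psi(\lambda,s)=0$ (lying on the bifurcating curve).
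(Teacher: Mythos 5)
The paper does not prove this statement at all: Theorem \ref{CR} is quoted verbatim as a classical result and attributed to the reference \cite{CR}, so there is no in-paper argument to compare against. Your Lyapunov--Schmidt sketch is a correct outline of the standard textbook proof, and all the pointwise identities you use check out ($\pa_s\zeta(\lambda_0,0)=0$, $\psi(\lambda_0,0)=(I-Q)\pa_fF(\lambda_0,0)f_0=0$, and $\pa_\lambda\psi(\lambda_0,0)=(I-Q)\pa_\lambda\pa_fF(\lambda_0,0)f_0\neq0$ by (CR4)). It differs from the original Crandall--Rabinowitz argument, which performs the division by the amplitude \emph{before} any reduction: one sets $g(\lambda,s,z)=s^{-1}F(\lambda,s(f_0+z))$ for $s\neq0$, extended by $\pa_fF(\lambda,0)(f_0+z)$ at $s=0$, and applies a single implicit function theorem in the pair $(\lambda,z)$, using that $D_{(\lambda,z)}g(\lambda_0,0,0)(\mu,w)=\mu\,\pa_\lambda\pa_fF(\lambda_0,0)f_0+\pa_fF(\lambda_0,0)w$ is an isomorphism of $\R\times Z$ onto $Y$ by (CR3)--(CR4). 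The advantage of that route is precisely the point you flag as your main obstacle: the cancellation $g_z(\lambda,s,z)=\pa_fF(\lambda,s(f_0+z))$ and the representation $g_\lambda=\int_0^1\pa_\lambda\pa_fF(\lambda,\tau s(f_0+z))(f_0+z)\,d\tau$ require only the derivatives listed in (CR2), with no second derivative $\pa_f^2F$. In your two-step version, be aware that $\psi$ is in general \emph{not} jointly $C^1$ under (CR2) alone (its $s$-derivative would involve $\pa_f^2F$); what you can and should verify is that $\psi$ is continuous with a continuous partial derivative $\pa_\lambda\psi$, which is exactly what the implicit function theorem needs to produce $\Phi$ \emph{continuous} --- consistent with the statement, which claims only continuity of $\Phi$ and $\beta$. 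With that adjustment, and the small bookkeeping step $\zeta(\Phi(s),s)=s\beta(s)$ with $\beta(0)=\pa_s\zeta(\lambda_0,0)=0$ to match the stated parametrization, your argument goes through.
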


We recall that $F$ a Fredholm operator of zero index if $\textnormal{Ker}(F_f(\lambda_0,0))$ is a one-dimensional subspace of $\mathbb{R}\times V$, and $\textnormal{Range}(F_f(\lambda_0,0))$ is a closed subspace of $Y$ of codimension one. In this context, we will say that $\lambda_0$ is an eigenvalue of $F$.

\section{Spectral study}

In order to apply the Crandall-Rabinowitz Theorem  {CR}, we should check the third and fourth hypothesis which are related to the spectral study of ${F}$. Since $\partial_f F(\beta,0,0,0)$ does not satisfied the hypothesis of Crandall-Rabinowitz theorem, we should also work with the free parameters $(V,\mu)$. Let us compute its linearized operator around the trivial solution.

\begin{proposition}\label{prop:paF}
The linearized operator of ${F}:\R^3\times   {B_{X^\alpha}(\rho)}\rightarrow Y^\alpha$  reads as
\begin{align*}
&D_{(\mu, V,f)}{F}(\beta,0,0,0)[\lambda_1,\lambda_2,h](\theta)\\
&=\textnormal{Re}\left[\frac12 \lambda_1+\lambda_2 w+\frac14 \beta \partial_\theta \mathcal{H}[\textnormal{Re}(wh''(w))](\theta)-\frac14 \beta \partial_\theta \mathcal{H}[\mbox{Re}(h'(w))](\theta)\right.\\
&\q\left.-\frac12 \partial_\theta \mathcal{H}[\textnormal{Re}(\overline{w}h(w))](\theta)+\frac12 w\overline{h(w)})\right].
\end{align*}
\end{proposition}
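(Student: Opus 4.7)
The plan is to evaluate the full Fréchet derivative by linearity, namely
\[
D_{(\mu,V,f)}F(\beta,0,0,0)[\lambda_1,\lambda_2,h] = \lambda_1\, \partial_\mu F(\beta,0,0,0) + \lambda_2\, \partial_V F(\beta,0,0,0) + \partial_f F(\beta,0,0,0)[h],
\]
so I would compute the three partial derivatives separately at the trivial state $(V,\mu,f)=(0,0,0)$, where $\phi(w)=w$, and hence $\phi'(w)=1$, $\phi''(w)=0$, $|\phi'(w)|=1$, $\overline{\phi(w)}=\overline{w}$ on $\mathbb{T}$, and $\kappa[\phi]\equiv 1$. The key general simplification to exploit is that the Hilbert transform of any constant vanishes, which will kill several terms produced by the $\mu$-derivative.

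For $\partial_V F$, I would simply specialize formula \eqref{eq:paVF} at $\mu=0, f=0$, obtaining $\partial_V F(\beta,0,0,0)=\mathrm{Re}[w]$, which produces the term $\mathrm{Re}[\lambda_2 w]$ in the statement. For $\partial_\mu F$, I would differentiate the defining expression \eqref{eq:F} of $F$ term by term using $\partial_\mu\phi=w$, $\partial_\mu\phi'=1$, $\partial_\mu\phi''=0$, and then evaluate at $\phi=w$. The term coming from $V$ disappears (factor of $V=0$). For the curvature and modulus-squared terms inside the Hilbert transform, a direct computation gives $\partial_\mu\kappa[\phi]\big|_{\phi=w}=-1$ and $\partial_\mu|\phi|^{2}\big|_{\phi=w}=2\mathrm{Re}[w\bar w]=2$, both constants in $\theta$; applying $\partial_\theta\mathcal{H}$ kills them. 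The remaining algebraic terms combine to give $\partial_\mu F(\beta,0,0,0)=\tfrac12$, yielding the $\tfrac12\lambda_1$ term.

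For $\partial_f F[h]$, I would start from the expression of $\partial_f F$ derived in the proof of Proposition \ref{prop:welldefined} and substitute $\phi=w$, $\phi'=1$, $\phi''=0$. The term $Vwh'(w)$ vanishes, and the linearised curvature simplifies (after correctly reading the derivative of $1/|\phi'|$ as $-\mathrm{Re}[h'(w)\overline{\phi'(w)}]/|\phi'(w)|^3$) to
\[
\kappa_0[\phi]\big|_{\phi=w}=\mathrm{Re}[w h''(w)]-\mathrm{Re}[h'(w)],
\]
which produces the two $\beta$-terms of the statement after applying $\tfrac14\beta\,\partial_\theta\mathcal{H}$. The term $-\tfrac12\partial_\theta\mathcal{H}[\overline{\phi(w)}h(w)]$ becomes $-\tfrac12\partial_\theta\mathcal{H}[\mathrm{Re}(\overline{w}h(w))]$ (recalling that the $|\phi|^{2}$-derivative is twice a real part). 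Finally the bilinear term gives $\tfrac12(w\overline{h(w)}+h'(w))$ and the modulus term contributes $-\tfrac12 h'(w)$, so $h'(w)$ cancels and only $\tfrac12 w\overline{h(w)}$ survives, matching the statement.

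The proof is essentially a bookkeeping exercise; the main subtlety I would be careful with is the derivative of $1/|\phi'|$ inside the linearised curvature (writing it correctly as a real part of a complex expression rather than as a complex quantity) and keeping track of the outer $\mathrm{Re}[\,\cdot\,]$ when moving real parts inside or outside the Hilbert transform. Once this is done, summing the three contributions yields exactly the expression claimed in Proposition \ref{prop:paF}.
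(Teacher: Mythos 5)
Your proposal is correct and follows essentially the same route as the paper: decompose $D_{(\mu,V,f)}F(\beta,0,0,0)$ by linearity into the three partial derivatives, get $\partial_V F=\mathrm{Re}[w]$ and $\partial_\mu F=\tfrac12$ from the explicit form of $F$ on the trivial branch (the paper reads these off from $F(\beta,V,0,0)=V\,\mathrm{Re}[w]$ and $F(\beta,0,\mu,0)=\tfrac12(1+\mu)\mu$, which is the same computation you do term by term), and specialize the formula for $\partial_f F$ from Proposition \ref{prop:welldefined} at $\phi(w)=w$. Your extra care with the real-part interpretation of the derivatives of $|\phi'|$ and $\kappa$ is consistent with, and slightly more explicit than, what the paper records.
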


\begin{proof}
  {
The linearized operator of $ {F}$ at $(\beta,0,0,0)$ is given by
\[
D_{(\mu, V,f)} {F}(\beta,0,0,0)[\lambda_1,\lambda_2,h]=\partial_f F(\beta,0,0,0)[h]+\partial_\mu F(\beta,0,0,0)\lambda_1+\partial_V F(\beta,0,0,0)\lambda_2,
\]
where $\partial_f F$ can be found in \eqref{eq:paVF}. For the other two derivatives, note that
$$
F(\beta,V,0,0)(\theta)=V\mbox{Re}[w],\quad F(\beta,0,\mu,0)(\theta)=\frac12 (1+\mu)\mu,\quad w=e^{i\theta},
$$
which implies
$$
\partial_V F(\beta,0,0,0)[\lambda_2](\theta)=\lambda_2 \mbox{Re}[w],\quad \partial_\mu F(\beta,0,0,0)[\lambda_1](\theta)=\frac12 \lambda_1.
$$

}
%\begin{align*}
%\partial_f F(\beta,0,0)[h](\theta)=&\mbox{Re}\left[\frac14 \beta \partial_\theta \mathcal{H}[\partial_\phi\kappa[\mbox{Id}][h]](\theta)-\frac12 \partial_\theta \mathcal{H}[\mbox{Re}(  {\overline{w}h(w)})](\theta) +\frac12 w\overline{h(w)}\right]
%\end{align*}
%Notice that
%$$
%\partial_\phi \kappa[\mbox{Id}][h](w)=-\mbox{Re}(h'(w))+\mbox{Re}(wh''(w)),
%$$
%and then
%\begin{align*}
%\partial_f F(\beta,0,0)[h](\theta)=\mbox{Re}&\left[\frac14 \beta \partial_\theta \mathcal{H}[\mbox{Re}(wh''(w))](\theta)-\frac14 \beta \partial_\theta \mathcal{H}[\mbox{Re}(h'(w))](\theta)-\frac12 \partial_\theta \mathcal{H}[\mbox{Re}(\overline{w}h(w))](\theta)\right.\\
%&\left.+\frac12 w\overline{h(w)})\right].
%\end{align*}

\end{proof}

In the following proposition, we write the linearized operator in Fourier series
\begin{proposition}\label{prop-fourier}
The linearized operator of $ {F}:\R^3 \times   {B_{X^\alpha}(\rho)}\rightarrow Y^\alpha$  in Fourier series agrees with
\begin{align*}
\mathcal{L}[\beta](\lambda_1,\lambda_2,h):=&\partial_\mu F(\beta,0,0,0)\lambda_1+\partial_V  {F}(\beta,0,0,0)\lambda_2+\partial_f  {F}(\beta,0,0,0)[h]\\
&=\frac12 \lambda_1+\lambda_2\cos(\theta)+\frac{1}{4}\sum_{n\geq 2}a_n n(n+1)(n-1)\cos(n\theta)\left\{\beta-\beta_n\right\},
\end{align*}
where
$$
\beta_n:=\frac{2}{n(n+1)},\quad n\geq 2.
$$
\end{proposition}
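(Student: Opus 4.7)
The plan is a direct Fourier computation starting from the expression for $D_{(\mu,V,f)}F(\beta,0,0,0)$ provided by Proposition~\ref{prop:paF}. The constant term $\frac12\lambda_1$ and the mode-one term $\lambda_2\cos\theta$ are already in the desired form, so only the $h$-dependent terms need to be expanded.

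Writing $w=e^{i\theta}$ and $h(w)=\sum_{n\ge 2}a_n w^{n+1}$, I first compute
\[
h'(w)=\sum_{n\ge 2}a_n(n+1)w^n,\qquad wh''(w)=\sum_{n\ge 2}a_n n(n+1)w^n,
\]
\[
\overline{w}\,h(w)=\sum_{n\ge 2}a_n w^n,\qquad w\,\overline{h(w)}=\sum_{n\ge 2}a_n \overline{w}^n.
\]
Taking real parts produces cosine series in $n\theta$ (for $n\ge 2$), with coefficients $a_n n(n+1)$, $a_n(n+1)$, $a_n$ and $a_n$ respectively.

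Next, I invoke the basic identity $\mathcal{H}[\cos(n\theta)]=\sin(n\theta)$ for $n\ge 1$ (as used implicitly in the proof of Proposition~\ref{prop:trans} via \cite[Sec.~9.3.1]{BN}), so that $\partial_\theta\mathcal{H}[\cos(n\theta)]=n\cos(n\theta)$. Applying this to each of the four Hilbert-transform or real-part terms in Proposition~\ref{prop:paF}, I obtain
\begin{align*}
\tfrac14\beta\,\partial_\theta\mathcal{H}[\mathrm{Re}(wh''(w))]&=\tfrac14\beta\sum_{n\ge 2}a_n n^2(n+1)\cos(n\theta),\\
-\tfrac14\beta\,\partial_\theta\mathcal{H}[\mathrm{Re}(h'(w))]&=-\tfrac14\beta\sum_{n\ge 2}a_n n(n+1)\cos(n\theta),\\
-\tfrac12\,\partial_\theta\mathcal{H}[\mathrm{Re}(\overline{w}h(w))]&=-\tfrac12\sum_{n\ge 2}a_n n\cos(n\theta),\\
\tfrac12\,\mathrm{Re}(w\,\overline{h(w)})&=\tfrac12\sum_{n\ge 2}a_n\cos(n\theta).
\end{align*}

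Finally, I collect the $\cos(n\theta)$ coefficients. The two $\beta$-terms combine to $\tfrac14\beta\, a_n n(n+1)(n-1)$, while the two non-$\beta$ terms combine to $-\tfrac12 a_n(n-1)$. Factoring out $a_n n(n+1)(n-1)/4$ from their sum yields
\[
\tfrac14 a_n n(n+1)(n-1)\Bigl(\beta-\tfrac{2}{n(n+1)}\Bigr)\cos(n\theta)=\tfrac14 a_n n(n+1)(n-1)(\beta-\beta_n)\cos(n\theta),
\]
which is exactly the announced formula. The only mildly delicate point is confirming the sign and normalization of the Hilbert transform on each Fourier mode; once that is fixed by the convention from Proposition~\ref{prop:trans}, everything else is algebraic bookkeeping, and no further obstacle arises.
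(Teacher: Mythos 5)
Your proof is correct and follows essentially the same route as the paper: a term-by-term Fourier expansion of the linearization from Proposition~\ref{prop:paF}, using $\partial_\theta\mathcal{H}[\cos(n\theta)]=n\cos(n\theta)$ on each mode and then collecting coefficients to factor out $\tfrac14 a_n n(n+1)(n-1)(\beta-\beta_n)$. The algebra matches the paper's computation exactly, so there is nothing to add.
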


\begin{proof}
Take $h(w)=\sum_{n\geq 2}a_n w^{n+1}$, then $h'(w)=\sum_{n\geq 2}a_n (n+1) w^n$. Let us start with the terms involving the Hilbert transform.

Using the ideas of \cite[Section 9.3.1, eq. (9.3.8)]{BN}, we can write the following:
\begin{align*}
\partial_\theta \mathcal{H}[\mbox{Re}(h'(w))](\theta)=&\frac{1}{2\pi}\partial_\theta \int_0^{2\pi}\mbox{Re}(h'(e^{is}))\cot\pare{\theta-s}ds\\
=&\partial_\theta \frac{1}{2\pi}\sum_{n\geq 1} a_n (n+1)\int_0^{2\pi}\sin(ns)\cot\pare{\theta-s}ds\\
=&\partial_\theta \sum_{n\geq 1}a_n(n+1)\sin(n\theta)\\
=&\sum_{n\geq 2} a_n n(n+1)\cos(n\theta).
\end{align*}
On the other hand
\begin{align*}
\partial_\theta \mathcal{H}[\mbox{Re}(wh''(w)))(\theta)=&\partial_\theta \sum_{n\geq 1} a_n n (n+1)\sin(n\theta)\\
=&\sum_{n\geq 2} a_n n^2 (n+1)\cos(n\theta)
\end{align*}
Then
\begin{align*}
\partial_f  {F}(\beta,0,0)[h](\theta)=&\sum_{n\geq 2}a_n\cos(n\theta)\left\{\frac14 \beta n^2(n+1)-\frac14 \beta n(n+1)-\frac12 n+\frac12\right\}\\
=&\frac14\sum_{n\geq 2}a_n n(n+1)(n-1)\cos(n\theta)\left\{\beta-\beta_n\right\}.
\end{align*}

On the other hand
$$
\partial_V  {F}(\beta,0,0)\lambda_2=\lambda_2\cos(\theta),
$$
and thus we achieve the result.
\end{proof}

\subsection{Kernel and range study} \label{sec:kernel}
We verify that the assumption (CR3) of Theorem  {CR} is satisfied. More precisely, we prove that,
choosing $\beta=\beta_m$, we get a one dimensional kernel generated by $(0,0,w^{m+1})$. On the other hand, $Y\backslash \mbox{Range}$ is generated by $\cos(m\theta)$.

\begin{proposition}\label{prop:kernel}
The kernel and the range of the linearized operator agrees with
$$
\textnormal{Ker}\, \mathcal{L}[\beta_m]=<(0,0,w^{m+1})>,
$$
and
$$
\textnormal{Range}\, \mathcal{L}[\beta_m]=\left\{f\in C^{0,\alpha}(\T), \quad f(e^{i\theta})=\sum_{m\neq n\geq 0}a_n \cos(n\theta), \quad  a_n\in\R\right\}.
$$
\end{proposition}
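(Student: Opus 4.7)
The plan is to exploit the Fourier-diagonal form of $\mathcal{L}[\beta_m]$ established in Proposition \ref{prop-fourier},
$$\mathcal{L}[\beta_m](\lambda_1,\lambda_2,h)(\theta) = \frac{\lambda_1}{2} + \lambda_2\cos\theta + \frac{1}{4}\sum_{n\geq 2} a_n\, n(n+1)(n-1)(\beta_m-\beta_n)\cos(n\theta),$$
where $h(w)=\sum_{n\geq 2}a_n w^{n+1}$. Since the operator is completely diagonal in Fourier modes, both the kernel and the range can be read off mode by mode; the only nontrivial issue is that the formal Fourier inversion used to identify the range must produce an element of $X^\alpha=C^{3,\alpha}$.

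For the kernel, matching coefficients in $\mathcal{L}[\beta_m](\lambda_1,\lambda_2,h)=0$ forces $\lambda_1=0$, $\lambda_2=0$ and $a_n(\beta_m-\beta_n)=0$ for every $n\geq 2$. Since $\beta_n=2/(n(n+1))$ is strictly decreasing in $n$, the unique $n\geq 2$ with $\beta_n=\beta_m$ is $n=m$, so $a_n=0$ for $n\neq m$ while $a_m\in\R$ is free. This yields $\textnormal{Ker}\,\mathcal{L}[\beta_m] = \langle(0,0,w^{m+1})\rangle$.

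For the range, the factor $\beta_m-\beta_n$ annihilates the $m$-th coefficient identically, so the image of $\mathcal{L}[\beta_m]$ is contained in the announced subspace of $Y^\alpha$. Conversely, given $g\in Y^\alpha$ with $g(\theta)=\sum_{n\neq m}c_n\cos(n\theta)$, I would set
$$\lambda_1 = 2c_0,\qquad \lambda_2 = c_1,\qquad a_n = \frac{4c_n}{n(n+1)(n-1)(\beta_m-\beta_n)}\quad (n\geq 2,\; n\neq m),$$
and $h(w)=\sum_{n\geq 2,\, n\neq m}a_n w^{n+1}$; by construction $\mathcal{L}[\beta_m](\lambda_1,\lambda_2,h)=g$.

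The main obstacle is checking that this formally defined $h$ actually lies in $X^\alpha$. Because $|\beta_m-\beta_n|\,n(n+1)(n-1)\sim \beta_m\, n^3$ as $n\to\infty$, the correspondence $g\mapsto h$ is a Fourier multiplier of order $-3$ on $\T$ (up to a bounded perturbation caused by the near-resonant denominator at indices close to $m$), so a standard Hölder multiplier estimate gives $\norm{h}_{C^{3,\alpha}}\leq C_m\norm{g}_{C^{0,\alpha}}$. Once this regularity is secured, the range is identified with the kernel of the continuous linear functional $g\mapsto \tfrac{1}{\pi}\int_0^{2\pi}g(\theta)\cos(m\theta)\,d\theta$ on $Y^\alpha$, hence closed and of codimension one. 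Combined with the one-dimensional kernel, this shows $\mathcal{L}[\beta_m]$ is Fredholm of index zero, delivering hypothesis (CR3) of the Crandall–Rabinowitz theorem.
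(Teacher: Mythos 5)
Your proposal is correct and follows essentially the same route as the paper: read the kernel and range off the Fourier-diagonal form of $\mathcal{L}[\beta_m]$, construct the formal preimage with exactly the coefficients you wrote, and reduce everything to showing that the inverse multiplier of order $-3$ maps $Y^\alpha$ into $C^{3,\alpha}$. The only difference is that where you invoke ``a standard H\"older multiplier estimate,'' the paper proves that bound concretely by writing $h_0'''$ as $\tfrac{4}{\beta_m}\overline{w}^3 g_0$ plus a convolution $K\star g_0$ with $K\in L^2\subset L^1$ (via Parseval and the decay $\beta_n\sim 2/n^2$), which is just an explicit implementation of the same idea.
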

\begin{proof}
The description of the kernel comes from the expression of the linearized operator in Fourier in Proposition  \ref{prop-fourier}.

Let us study the range. Note that
$$
\textnormal{Range}\, \mathcal{L}[\beta_m] \subset  \left\{f\in C^{0,\alpha}(\T), \quad f(e^{i\theta})=\sum_{m\neq n\geq 0}a_n \cos(n\theta), \quad  a_n\in\R\right\}.
$$
Let us check the other inclusion. Take
$$
g_0\in \left\{f\in C^{0,\alpha}(\T), \quad f(e^{i\theta})=\sum_{m\neq n\geq 0}a_n \cos(n\theta), \quad  a_n\in\R\right\},
$$
and let us check that it has a preimage. We can write $g$ as
$$
g_0(e^{i\theta})=\sum_{m\neq n\geq 0}d_n \cos(n\theta).
$$
Then, the equation
$$
\mathcal{L}[\beta_m](\lambda_1,\lambda_2,h)=g_0,
$$
implies
$$
\lambda_1=2d_0,\quad \lambda_2=d_1, \quad a_n=\frac{4}{n(n+1)(n-1)(\beta_m-\beta_n)}d_n, \, n\geq 2, n\neq m.
$$
Then, the candidate to preimage is
$$
\lambda_1=2d_0,\quad \lambda_2=d_1, \quad h_0(w)=\sum_{m\neq n\geq 2}\frac{4}{n(n+1)(n-1)(\beta_m-\beta_n)}d_n w^{n+1}.
$$
It remains to check that $h_0\in X$, and in particular, $h_0\in C^{3,\alpha}(\T)$. Notice that
$$
h_0'''(w)=\sum_{m\neq n\geq 2}\frac{4}{(\beta_m-\beta_n)}d_n w^{n-2}.
$$
Recalling that $|w|=1$, and summing and subtracting $1/\beta_m$, it can be written as
\begin{align*}
h_0'''(w)=&4\overline{w}^3\sum_{m\neq n\geq 2}\left(\frac{1}{(\beta_m-\beta_n)}-\frac{1}{\beta_m} \right) d_n w^{n+1}+4\frac{1}{\beta_m}\overline{w}^3\sum_{m\neq n\geq 2}d_n w^{n+1}\\
=&4\overline{w}^3\sum_{m\neq n\geq 2}\frac{\beta_n}{(\beta_m-\beta_n)\beta_m} d_n w^{n+1}+4\frac{1}{\beta_m}\overline{w}^3\sum_{m\neq n\geq 2}d_n w^{n+1}.
\end{align*}
The second term is clearly in $C^{0,\alpha}$ since $g_0\in C^{0,\alpha}$. The first term can be written as a convolution:
$$
\sum_{m\neq n\geq 2}\frac{\beta_n}{(\beta_m-\beta_n)\beta_m} d_n w^{n+1}=K\star g_0,
$$
where
$$
K(w)=\sum_{m\neq n\geq 2}\frac{\beta_n}{(\beta_m-\beta_n)\beta_m} w^{n+1}.
$$
Since we have a convolution, and $g_0\in C^{0,\alpha}$, we have that the first term belongs to $C^{0,\alpha}$ if $K\in L^1$. Indeed, we can check that $K\in L^2$ using Parseval's inequality:
$$
\norm{K}_2^2=\sum_{m\neq n\geq 2}\frac{\beta_n^2}{(\beta_m-\beta_n)^2\beta_m^2}\leq C\sum _{m\neq n\geq 2}\frac{1}{n^4}\leq C,
$$
by using the decay of $\beta_n$. That concludes the proof.
\end{proof}

\subsection{Transversal condition} \label{sec:transversal}
Here, we aim to prove the transversal condition (CR4) of the Crandall-Rabinowitz Theorem  {CR}.

\begin{proposition}
For $\beta=\beta_m$, with $m\geq 2$, the transversal condition is satisfied, that is
$$
\partial_\beta \partial_{(\mu,V,f)}  {F}(\beta_m,0,0,0)[0,0,w^{m+1}]\notin \textnormal{Range}\, \mathcal{L}[\beta_m].
$$

\end{proposition}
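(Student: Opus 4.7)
The plan is to exploit the diagonal form of the linearized operator $\mathcal{L}[\beta]$ in the Fourier basis, derived in Proposition \ref{prop-fourier}, combined with the characterization of $\textnormal{Range}\,\mathcal{L}[\beta_m]$ obtained in Proposition \ref{prop:kernel}. Once we differentiate the Fourier expansion with respect to $\beta$, the transversality condition will follow by reading off a single coefficient.

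First, I would recall the explicit formula
\[
\mathcal{L}[\beta](\lambda_1,\lambda_2,h)(\theta)=\frac12\lambda_1+\lambda_2\cos\theta+\frac14\sum_{n\geq 2}a_n\,n(n+1)(n-1)\bigl(\beta-\beta_n\bigr)\cos(n\theta),
\]
valid for $h(w)=\sum_{n\geq 2}a_n w^{n+1}\in X^\alpha$. Since the dependence on $\beta$ is affine and only the coefficients of the cosine modes $n\geq 2$ are affected, we may differentiate under the sum to obtain
\[
\partial_\beta\mathcal{L}[\beta](\lambda_1,\lambda_2,h)(\theta)=\frac14\sum_{n\geq 2}a_n\,n(n+1)(n-1)\cos(n\theta),
\]
which is independent of $\beta$, $\lambda_1$ and $\lambda_2$.

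Next, I would specialize to the kernel generator. Evaluating at $(\lambda_1,\lambda_2,h)=(0,0,w^{m+1})$ (so that $a_m=1$ and $a_n=0$ for $n\ne m$) gives
\[
\partial_\beta\partial_{(\mu,V,f)}F(\beta_m,0,0,0)[0,0,w^{m+1}](\theta)=\frac14 m(m+1)(m-1)\cos(m\theta).
\]
For $m\geq 2$ the prefactor $\tfrac14 m(m+1)(m-1)$ is strictly positive, so this element is a nonzero scalar multiple of $\cos(m\theta)$.

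Finally, I would invoke Proposition \ref{prop:kernel}, which identifies
\[
\textnormal{Range}\,\mathcal{L}[\beta_m]=\Bigl\{f\in C^{0,\alpha}(\mathbb{T}):\ f(e^{i\theta})=\sum_{n\ne m,\, n\geq 0}a_n\cos(n\theta)\Bigr\},
\]
i.e.\ the range is precisely the closed subspace of $Y^\alpha$ in which the $m$-th cosine coefficient vanishes. Since the object we just computed has a nonzero $m$-th cosine coefficient and zero coefficients for every $n\ne m$, it clearly does not belong to $\textnormal{Range}\,\mathcal{L}[\beta_m]$, which establishes (CR4). There is no real obstacle here beyond verifying the Fourier arithmetic; the condition $m\geq 2$ is used exactly to ensure that $m(m+1)(m-1)\ne 0$ so that the $\cos(m\theta)$ coefficient is genuinely nonzero.
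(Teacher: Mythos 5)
Your proposal is correct and follows essentially the same route as the paper: differentiate the Fourier-diagonal form of $\mathcal{L}[\beta]$ in $\beta$, evaluate on the kernel generator $(0,0,w^{m+1})$ to get $\tfrac14 m(m+1)(m-1)\cos(m\theta)$, and observe that this has a nonzero $m$-th cosine coefficient while the range consists exactly of those elements of $Y^\alpha$ whose $m$-th cosine coefficient vanishes. The paper's proof is just a terser version of the same computation.
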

\begin{proof}
Notice that
\begin{align*}
\partial_\beta \partial_{(\mu,V,f)}  {F}(\beta_m,0,0,0)[0,0,w^{m+1}]= \frac14 m(m+1)(m-1)\cos(m\theta),
\end{align*}
which does not belong to the range.
\end{proof}

\section{Main result}
Finally, we state a detailed version of Theorem \ref{theo:main-intro}, which is the main goal of our work. For that, let us modify the spaces \eqref{eq:X}-\eqref{eq:Y} by adding the $m$-fold symmetry:
\begin{align*}
X^\alpha_m:=&\left\{f\in C^{3,\alpha}(\T), \quad f(w)=\sum_{n\geq 2}a_{nm} w^{nm+1}, \quad a_{nm}\in\R\right\}\\%\label{eq:Xm}\\
Y^\alpha_m:=&\left\{f\in C^{0,\alpha}(\T), \quad f(e^{i\theta})=\sum_{n\geq 0}a_{nm} \cos(nm\theta), \quad a_{nm}\in\R\right\},%\label{eq:Ym}
\end{align*}
where $m\in\N$. Note that we can use all the work done in the previous sections just changing $n$ by $nm$.

\begin{theorem}\label{theo:main}
Let $m\geq 2$,
\[
\beta_m=\frac{2}{m(m+1)},
\]
and
$$
\phi(w)=(1+\mu)w+f(w), \quad w\in\T,
$$
 which maps $\T$ into some boundary $\partial D$. Then, there exists $\varepsilon>0$ and continuous curves $\xi\in (-\varepsilon,\varepsilon) \mapsto (\beta(\xi),V(\xi),\mu(\xi),f(\xi))$ such that
\[
F\pare{\beta(\xi),V(\xi),\mu(\xi),f(\xi)}(\theta)=0\qquad\forall\theta\in[0,2\pi].
\]
Moreover $\beta(0)=\beta_m$, $V(0)=\mu(0)=0$ and $f(0)=w^{m+1}$. Hence, the associated domain $D_{\xi}$ describes a $m$-fold symmetric traveling wave with constant speed $V(\xi)$.
\end{theorem}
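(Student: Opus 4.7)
The plan is to apply the Crandall--Rabinowitz theorem (Theorem \ref{CR}) to $F$, viewed as a map
\[
F:\R\times\pare{\R^2\times B_{X_m^\alpha}(\rho)}\longrightarrow Y_m^\alpha,
\]
with bifurcation parameter $\beta$ and state variable $(\mu,V,f)$, at the point $\beta=\beta_m$. All the heavy lifting has already been done in the preceding sections; what remains is essentially bookkeeping inside the $m$-fold symmetric spaces $X_m^\alpha$ and $Y_m^\alpha$.

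The first step is to check that $F$ is well-defined as a map into $Y_m^\alpha$. Since $\phi(w)=(1+\mu)w+f(w)$ with $f\in X_m^\alpha$ involves only Fourier modes $w^{nm+1}$, inspection of the building blocks of $F$ in \eqref{eq:F} shows that $|\phi(w)|^2$, $\kappa[\phi(w)]$, $w\overline{\phi(w)}\phi'(w)$ and $|\phi'(w)|$ all involve only modes of the form $e^{inm\theta}$, and hence so do their Hilbert transforms and $\theta$-derivatives. Combined with Proposition \ref{prop:welldefined}, this gives a well-defined $C^1$ map between the $m$-fold symmetric spaces.

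The second step is to verify (CR1)--(CR4) in this restricted setting. Condition (CR1) is immediate from Proposition \ref{prop:0}. Condition (CR2) follows from Proposition \ref{prop:welldefined}, together with the explicit partial derivative in $\beta$ computed at the end of its proof. For (CR3), I rely on Proposition \ref{prop-fourier}: restricted to $X_m^\alpha$, the only Fourier indices appearing in the linearized operator are $n=jm$ with $j\ge 1$, and among these only $j=1$ produces $\beta_{jm}=\beta_m$, so $\mathrm{Ker}\,\mathcal{L}[\beta_m]=\langle (0,0,w^{m+1})\rangle$ is one-dimensional. The range computation of Proposition \ref{prop:kernel} transfers verbatim (the convolution kernel $K$ there remains in $L^2$ because only the subsequence $\beta_{jm}$ is involved), showing the range is closed of codimension one in $Y_m^\alpha$, spanned by $\cos(m\theta)^\perp$. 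Condition (CR4) is the preceding transversality proposition: $\partial_\beta\partial_{(\mu,V,f)}F(\beta_m,0,0,0)[0,0,w^{m+1}]=\tfrac14 m(m+1)(m-1)\cos(m\theta)$ lies exactly in the missing direction of the range.

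The third step is to apply Theorem \ref{CR}, which produces $\varepsilon>0$ and continuous curves $\xi\in(-\varepsilon,\varepsilon)\mapsto(\beta(\xi),V(\xi),\mu(\xi),f(\xi))$ with $\beta(0)=\beta_m$, $V(0)=\mu(0)=0$ and $f(\xi)=\xi w^{m+1}+o(\xi)$ such that $F(\beta(\xi),V(\xi),\mu(\xi),f(\xi))\equiv 0$. Unwinding the change of unknown made in Section 2, the domain $D_\xi$ parametrized by $\phi_\xi(w)=(1+\mu(\xi))w+f(\xi)(w)$ gives a traveling wave solution of the original system \eqref{eq:model-P}--\eqref{eq:model-bc-v}, with horizontal speed $V(\xi)$; the $m$-fold symmetry of $D_\xi$ is a direct consequence of $f(\xi)\in X_m^\alpha$. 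The only delicate point in the whole argument is the Fredholm/range inclusion in (CR3): one must really check that the estimate on $K$ from Proposition \ref{prop:kernel} gives $C^{3,\alpha}$-regularity of the preimage rather than only $C^{0,\alpha}$, but this was already settled via the $\beta_n$-decay, so no new work is required here.
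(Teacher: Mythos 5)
Your proposal is correct and follows essentially the same route as the paper: the proof there likewise consists of applying the Crandall--Rabinowitz theorem with bifurcation parameter $\beta$ at $\beta_m$, citing Proposition \ref{prop:welldefined} for well-definedness and $C^1$ regularity, Proposition \ref{prop:0} for the trivial branch, and the kernel/range and transversality propositions for (CR3)--(CR4), all transferred to the $m$-fold symmetric spaces by replacing $n$ with $nm$. Your write-up in fact supplies slightly more detail than the paper on why the restriction to $X_m^\alpha$, $Y_m^\alpha$ is legitimate and why the kernel becomes one-dimensional there.
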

\begin{proof}
The proof is based on the application of the Crandall-Rabinowitz theorem to $F$.

By Proposition \ref{prop:welldefined} we know that
$$
F:\R^3\times B_{X^\alpha_m}(\rho)\mapsto Y^\alpha_m,
$$
is well-defined and $C^1$ for $m\geq 1$, $\alpha\in(0,1)$, and $\rho<1$. Moreover, Proposition \ref{prop:0} implies that
$$
F(\beta,0,0,0)\equiv 0, \quad \forall\beta.
$$
Using Proposition \ref{prop:kernel} and Proposition \ref{prop:trans} we know that the linearized operator is Fredholm with one dimensional kernel, and the transversal condition is satisfied.
\end{proof}

\section*{Acknowledgments}
C.G. has been supported by RYC2022-035967-I (MCIU/AEI/10.13039/501100011033 and FSE+), and partially by Grants PID2022-140494NA-I00 and PID2022-137228OB-I00 funded by MCIN/AEI/10.13039/501100011033/FEDER, UE, by Grant C-EXP-265-UGR23 funded by Consejeria de Universidad, Investigacion e Innovacion \& ERDF/EU Andalusia Program, and by Modeling Nature Research Unit, project QUAL21-011.\\
The work of M.M. was partially supported by Grant RYC2021-033698-I, funded by the Ministry of Science and Innovation/State Research Agency/10.13039/501100011033, by the European Union ”NextGenerationEU/Recovery, Transformation and Resilience Plan”; by the project ”Análisis Matemático Aplicado y Ecuaciones Diferenciales” Grant PID2022-141187NB- I00 funded by MCIN /AEI /10.13039/501100011033 / FEDER, UE and acronym ”AMAED”; by the project PID2022-140494NA-I00 funded by MCIN/AEI/10.13039/501100011033/FEDER, UE.

\end{document}